\documentclass{amsproc}
\usepackage{amsmath}
\usepackage{float}
\usepackage{amsfonts}
\usepackage{amssymb}
\usepackage{hyperref}
\hypersetup{colorlinks,
citecolor=black,
filecolor=black,
linkcolor=black,
urlcolor=black}
\usepackage{graphics,epstopdf}
\usepackage[pdftex]{graphicx}
\usepackage{newlfont}\newlength{\defbaselineskip}
\usepackage[left=1in,right=1in,top=1in,bottom=0.8in,footskip=0.25in]{geometry}
\setlength{\defbaselineskip}{\baselineskip}
\usepackage{setspace}
%\singlespacing
\ExecuteOptions{dvipsone}
\setcounter{page}{1}
\allowdisplaybreaks
\newtheorem{theorem}{Theorem}[section]
\newtheorem{example}{Example}[section]
\newtheorem{lemma}{Lemma}[section]
\newtheorem{definition}[theorem]{Definition}
\newtheorem{remark}{Remark}[section]

\numberwithin{equation}{section}

\setcounter{page}{1}
\newtheorem{corollary}{Corollary}[section]
%\newtheorem{proof}{Proof}[section]

%%%%%%%%%%%%%%%%%%%%%%%%%%%%%%%%%%%%%%%%%%%%%%%%%%%%%%%%%%%%%%%%%%%%%%
\usepackage{fancyhdr}
\usepackage{graphics}
\usepackage{color}
\usepackage{rotating}
\usepackage{fancybox}

%%%%%%%%%%%%%%%%%%%%%%%%%%%%%%%%%%%%%%%%%%%%%%%
%%%%%%%%%%%%%%%%%%%%%%%%%%%%%%%%%%%%%%%%%%%%%%%
%%%%%%%%%%%%%%%%%%%%%%%%%%%%%%%%%%%%%%%%%%%%%%%
%%%%%%%%%%%%%%%%%%%%%%%%%%%%%%%%%%%%%%%%%%%%%%%
\begin{document}

\begin{center}
\title{Construction of  Sz$\acute{\text{a}}$sz-Mirakjan-type operators which preserve $a^x, a>1$}
\maketitle
{\bf Rishikesh Yadav$^{1,\star}$, Vishnu Narayan Mishra$^{2,\dag}$ }\\ 

$^{1}$Applied Mathematics and Humanities Department,
Sardar Vallabhbhai National Institute of Technology,  Surat, Surat-395 007 (Gujarat), India.

$^{2}$Department of Mathematics, Indira Gandhi National Tribal University, Lalpur, Amarkantak 484 887, Anuppur, Madhya Pradesh, India.\\
%Ichchhanath Mahadev Dumas Road, Surat, Surat-395 007 (Gujarat), India.\\
\end{center}
\begin{center}
$^\star$rishikesh2506@gmail.com,
$^\dag$vishnunarayanmishra@gmail.com
\end{center}

%%%%%%%%%%%%%%%%%%%%%%%%%%%%________________________________%
%______________________________________________________________________________%
\begin{abstract}
In this paper, we introduce a new type of  Sz$\acute{\text{a}}$sz-Mirakjan operators, which preserve $a^x$, $a>1$ fixed and $x\geq 0$. We study uniform convergence of the operators by using some auxiliary result and also error estimation is given. 
The convergence of said operators are shown and analyzed by graphics, also in same direction we find better rate of convergence than Sz$\acute{\text{a}}$sz-Mirakjan operators by analyzing the graphics. Voronovskaya-type theorem is studied and a comparison is shown under sense of convexity with  Sz$\acute{\text{a}}$sz-Mirakjan operators.  In last section, modified sequence is constructed in the space of integral function.

\end{abstract}

\textbf{Keywords:} Sz$\acute{\text{a}}$sz-Mirakjan operators, King's operators, modulus of continuity, Voronovskaya-type theorem, convexity of  function.

\section{Introduction}
In 1941, Mirakjan \cite{GM} defined the operators $S_n:C_2[0,\infty)\rightarrow C[0,\infty)$ for any $x\in [0,\infty)$ and for any $n\in\mathbb{N}$ given by\\\\
\begin{equation}\label{1}
S_n(f;x)=\sum\limits_{k=0}^{\infty}s_{n,k}(x)f\left(\frac{k}{n}\right), 
\end{equation}
where $s_{n,k}(x)=e^{-nx}\frac{(nx)^k}{k!}$,~~~~ $x\in[0,\infty)$,\\\\
and 
\begin{center}
 $C_2[0,\infty)=\{f\in C[0,\infty):\underset{x\rightarrow 0}{\lim}\;\frac{f(x)}{1+x^2}\;\;\;\; \text {exists and is finite}\},$
 \end{center} which is a Banach space endowed with 
 \begin{center}
 $\|f\|=\underset{x\in[0,\infty)}{\sup}\frac{|f(x)|}{1+x^2}$.
 \end{center}
 
  The operators $S_n(f;x)$ are called Sz$\acute{\text{a}}$sz-Mirakjan operators, where $s_{n,k}(x)$ are Sz$\acute{\text{a}}$sz's  basis functions. They were extensively studied in 1950 by  Sz$\acute{\text{a}}$sz \cite{OS}.\\
 
 The operators $S_n$ have many more properties similar to classical Bernstein operators.  Both are positive and linear operators. Most of the approximating operators $L_n$ (say) preserve  $e_i(x)= x^i\;(i=0,1)$ i.e. $L_n(e_0)=e_0(x)\; \text{and}\; L_n(e_1)=e_1(x),~~ n\in\mathbb{N}$. These conditions hold for the Bernstein polynomials, the   Sz$\acute{\text{a}}$sz-Mirakjan operators, the Baskakov operators (see \cite{PN}, \cite{CB}, \cite{PP}, \cite{RA}) but  $L_n(e_2)\neq e_2,$ for any of these operators.
 
 In 2003 King \cite{JP} approached  a sequence   $\{V_n\}$ of linear positive operators which modify the Bernstein operator and approximate each continuous function on [0,1]. These operators preserve the test functions $e_0$ and $e_2$ and have better rate of convergence than classical Bernstein operators in $0\leq x\leq\frac{1}{3}.$  King's approach was further investigated by several authors. In \cite{MA} the authors investigated some approximation result on the Meyer-K\"{o}nig and Zeller type operators which preserve $x^2$.   \\ 
 %______________________________________________________________________________%
 %______________________________________________________________________________%
 In 2006, Morales et al.  \cite{DK} considered an operators $B_{n,\alpha}$ which fix  $e_0$ and $e_0+\alpha e_2,$ where $\alpha\in [0,\infty)$ and  after that some extension appeared in paper  \cite{HG}. In this paper the authors  assume $\tau_n=(B_n\tau)^{-1}\circ \tau$, where $\tau$ is any strictly continuous function defined on [0,1] such that $\tau(0)=0$ and $\tau(1)=1$, i.e., they studied  a sequence $V_n^{\tau}:C[0,1]\rightarrow C[0,1]$, defined by 
 \begin{center}
  $V_n^{\tau}=B_nf\circ\tau_n=B_nf\circ (B_n\tau)^{-1}\circ \tau.$
 \end{center}
 Here the operators $V_n^{\tau}f$ preserve $e_0$ and  $\tau$. \\
 
 Moreover a general extension is seen in paper \cite{JM} by considering operators $(B_{n,0,j}f)$ which fix $e_0$ as well as $e_j$ where $1<j\leq n$ and are defined as 
 \begin{center}
 $B_{n,0,j}f(x)=\sum\limits_{k=0}^{n}\left(\begin{array}{c}n\\ k\end{array}\right){x}^n {(1-x)^{n-k}}f\left(\left(\frac{k(k-1)\cdots(k-j+1)}{n(n-1)\cdots(n-j+1)}\right)^{\frac{1}{j}}\right),\;\;\;\ f\in C[0,1].$
 \end{center}
But in whole of the process these operators (all above) are defined on a finite interval. \\
 
 On the other hand,  Duman and  \"{O}zarslan \cite{OD} constructed Sz$\acute{\text{a}}$sz-Mirakjan-type operators which reproduce $e_0$, $e_2$ and having better error estimation  than  the classical Sz$\acute{\text{a}}$sz-Mirakjan operators.\\ 
%______________________________________________________________________________%
 %______________________________________________________________________________%

In 2016, a modification of Sz$\acute{\text{a}}$sz-Mirakjan-type operators which reproduce $e_0$ and $e^{2ax},\; a>0,$ were constructed by  Acar et al. \cite{TA}  and they studied important properties related to these operators. They proved uniform convergence, order of approximation with the help of a certain weighted modulus of continuity, and also discussed some shape preserving properties of the defined operators. 

\section{Construction of the operators}
Let $u_n(x)$ be a continuous sequence of function defined on $[0,\infty)$, with $0\leq u_n(x)<\infty$,  then by (\ref{1}), we have 
\begin{equation}\label{b}
L_n^*(f;u_n(x))=e^{-nu_n(x)}\sum\limits_{k=0}^{\infty}\frac{(nu_n(x))^k}{k!}f\left(\frac{k}{n}\right),
\end{equation}
for every $f\in C[0,\infty)$ and each $x\in [0,\infty),~~~\text{where}~~n\in\mathbb{N}$.

The set $\{e_0,e_1,e_2\}$ is a  $K_+-$subset of $C_\rho[0,\infty)$ for $\rho\geq2$. This space $C_\rho[0,\infty)$ is isomorphic to $C[0,1]$, where $K_+$ is the Korovkin set  (see \cite{AF2} for details). \\

Now, if  $u_n(x)$ is replaced by $s_n(x)$  defined as 
\begin{equation}\label{a}
s_n(x)=\frac{x\log{a}}{(-1+a^{\frac{1}{n}})n},\;\;\;\;\;\ \forall\;\ x\geq 0,\;\ a>1,
\end{equation}  
then we have the following linear positive  operators:
\begin{equation}\label{s}
S_{n,a}^*(f;x)=\sum\limits_{k=0}^{\infty}a^{\left(\frac{-x}{-1+a^{\frac{1}{n}}}\right)}\frac{(x\log{a})^k}{(-1+a^{\frac{1}{n}})^kk!}f\left(\frac{k}{n}\right)
\end{equation}
where $f\in C_\rho[0,\infty),\;\;\ \rho>0$ and $x\geq 0.$\\

\begin{remark}
\em{We have 
\begin{eqnarray*}
s_n(x)=\frac{x\log{a}}{\left(-1+a^{\frac{1}{n}}\right) n},~~~~~~~~~~\forall~ x\geq 0,\;\ a>1.
\end{eqnarray*}
Now taking limit as $n\rightarrow \infty$, then
\begin{eqnarray*}
\underset{n\rightarrow\infty}{\lim}s_n(x)&=& \underset{n\rightarrow\infty}{\lim}\frac{x\log{a}}{\left(-1+a^{\frac{1}{n}}\right) n}\\&=& x.
\end{eqnarray*}
 i.e. the given sequence of functions (\ref{a}) converges to $x$, then equation (\ref{b}) reduces to the well known Sz$\acute{\text{a}}$sz-Mirakjan operators (\ref{1}).}
\end{remark}

%\textbf{Remark:} 

\section{Auxiliary results}
In this section, we shall give some properties of the operators (\ref{b}) which we shall use in the proof of the main theorem.

\begin{lemma}\label{L1}
\em{Let $e_i(x)=x^i,\;\ i=0,1,2,3,4.$ Then for all $x\geq 0$, we have 
\begin{eqnarray*}
1.~S_{n,a}^*(e_0;x)&=&1,\\
2.~S_{n,a}^*(e_1;x)&=&\frac{x\log{a}}{\left(-1+a^{\frac{1}{n}}\right) n},\\
3.~S_{n,a}^*(e_2;x)&=& \frac{x\log{a}\left(-1+a^{\frac{1}{n}}+x\log{a}\right)}{\left(-1+a^{\frac{1}{n}}\right)^2n^2},\\
4.~S_{n,a}^*(e_3;x)&=&\frac{x\log{a}\left(\left(-1+a^{\frac{1}{n}}\right)^2+3\left(-1+a^{\frac{1}{n}}\right) x\log{a}+x^2(\log{a})^2\right)}{\left(-1+a^{\frac{1}{n}}\right)^3n^3},\\
5.~S_{n,a}^*(e_4;x)&=&\frac{x\log{a}\left(\left(-1+a^{\frac{1}{n}}\right)^3+7\left(-1+a^{\frac{1}{n}}\right)^2 x\log{a}+6\left(-1+a^{\frac{1}{n}}\right)(x\log{a})^2+(x\log{a})^3\right)}{\left(-1+a^{\frac{1}{n}}\right)^4 n^4}.
\end{eqnarray*}}
\end{lemma}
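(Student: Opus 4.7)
The plan is to reduce the computation to the known moments of the classical operator $S_n$ defined in (\ref{1}). First I would rewrite the operator (\ref{s}) in a more transparent form. Setting $\alpha := -1 + a^{1/n}$, so that $s_n(x) = (x\log a)/(\alpha n)$ by (\ref{a}), one has $n s_n(x) = (x\log a)/\alpha$, hence
\begin{equation*}
a^{-x/\alpha} = e^{-(x\log a)/\alpha} = e^{-n s_n(x)}, \qquad \frac{(x\log a)^k}{\alpha^k} = \bigl(n s_n(x)\bigr)^k.
\end{equation*}
Plugging these into (\ref{s}) collapses the definition to $S_{n,a}^*(f;x) = S_n(f; s_n(x))$: the new operator is nothing but the classical Sz$\acute{\text{a}}$sz-Mirakjan operator evaluated at the modified argument $s_n(x)$. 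This identification is the conceptual heart of the lemma, and it is exactly the reason the auxiliary form (\ref{b}) was introduced just before the statement.

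Next I would compute the classical moments $S_n(e_j; y)$ for $j = 0, 1, 2, 3, 4$. These can be obtained by repeatedly differentiating the identity $\sum_{k=0}^\infty \lambda^k/k! = e^\lambda$ with respect to $\lambda$, multiplying by $\lambda$ after each differentiation to generate the factors $k$, and then renormalizing by $e^{-\lambda}/n^j$ with $\lambda = ny$. One arrives at $S_n(e_0;y) = 1$, $S_n(e_1;y) = y$, $S_n(e_2;y) = y^2 + y/n$, $S_n(e_3;y) = y^3 + 3y^2/n + y/n^2$, and $S_n(e_4;y) = y^4 + 6y^3/n + 7y^2/n^2 + y/n^3$. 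These are standard and appear throughout the Sz$\acute{\text{a}}$sz-Mirakjan literature; alternatively they are recognizable as $n^{-j}$ times the raw moments of a Poisson variable with parameter $ny$.

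Finally I would substitute $y = s_n(x) = (x\log a)/(\alpha n)$ into these five formulas, place the resulting expressions over the common denominator $\alpha^j n^j$, and factor out $x\log a$. The stated identities then match term by term. I do not anticipate any conceptual obstacle; the only nuisance is the algebraic bookkeeping for $e_4$, where the four contributions $y^i/n^{4-i}$ (with $i=1,2,3,4$) must be collected, but once one notes that each produces $(x\log a)^i/(\alpha^i n^4)$, the advertised polynomial $\alpha^3 + 7\alpha^2 (x\log a) + 6\alpha (x\log a)^2 + (x\log a)^3$ inside the bracket of item~5 falls out immediately. Thus, after the reduction $S_{n,a}^* = S_n\circ s_n$ the whole lemma is essentially an exercise in substitution and simplification.
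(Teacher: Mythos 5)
Your proposal is correct, and your moment formulas check out: with $\alpha=-1+a^{1/n}$ and $y=s_n(x)=(x\log a)/(\alpha n)$, substituting the classical values $S_n(e_4;y)=y^4+6y^3/n+7y^2/n^2+y/n^3$, etc., reproduces items 1--5 exactly, including the coefficients $7$ and $6$ in item 5. The route is, however, organized differently from the paper's. The paper proves the lemma by direct manipulation of the defining series: it writes out $S_{n,a}^*(e_0;x)$ and $S_{n,a}^*(e_1;x)$, shifts the summation index to resum the exponential series, and then asserts that "the other equalities can be proved similarly," leaving the $e_2,e_3,e_4$ computations (which require the factorial-moment sums $\sum_k k^j\lambda^k/k!$) to the reader. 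You instead first establish the identity $S_{n,a}^*(f;x)=S_n(f;s_n(x))$ --- which is precisely the reduction the paper sets up in its equation (2.1) but never invokes in this proof --- and then import the standard Sz\'asz--Mirakjan (Poisson) moments and substitute. Both arguments rest on the same underlying exponential-series sums, so neither is more general, but your packaging makes the higher moments mechanical and actually supplies the detail the paper omits; the paper's direct computation has the minor advantage of not presupposing the classical moment formulas. Either way the lemma is an elementary verification, and your write-up would serve as a complete proof where the paper's is only a sketch.
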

\begin{proof}
We have $f\in C[0,\infty)$ and for each $x\geq 0$, then 
\begin{eqnarray*}
1.~ S_{n,a}^*(e_0;x) &=& \sum\limits_{k=0}^{\infty}a^{\left(\frac{-x}{-1+a^{\frac{1}{n}}}\right)}\frac{(x\log{a})^k}{(-1+a^{\frac{1}{n}})^kk!}1\\&=& a^{\left(\frac{-x}{-1+a^{\frac{1}{n}}}\right)}a^{\left(\frac{x}{-1+a^{\frac{1}{n}}}\right)} \\&=&1.\\
2.~ S_{n,a}^*(e_1;x) &=& \sum\limits_{k=0}^{\infty}a^{\left(\frac{-x}{-1+a^{\frac{1}{n}}}\right)}\frac{(x\log{a})^k}{(-1+a^{\frac{1}{n}})^kk!}\frac{k}{n}\\&=& \frac{a^{\left(\frac{-x}{-1+a^{\frac{1}{n}}}\right)}}{n} \sum\limits_{k=0}^{\infty}\frac{(x\log{a})^k}{(-1+a^{\frac{1}{n}})^k(k-1)!}\\&=& \frac{a^{\left(\frac{-x}{-1+a^{\frac{1}{n}}}\right)}}{n} a^{\left(\frac{x}{-1+a^{\frac{1}{n}}}\right)} \frac{(x\log{a})}{(-1+a^{\frac{1}{n}})}\\ &=& \frac{(x\log{a})}{(-1+a^{\frac{1}{n}})n},
\end{eqnarray*}
similarly, the other equalities can be  proved. 
\end{proof}
\begin{lemma}\label{L2}
\em{Let $\lambda\geq 0$. Then we have 
\begin{eqnarray*}
S_{n,a}^*(e^{\lambda t};x)&=&\sum\limits_{k=0}^{\infty}a^{\left(\frac{-x}{-1+a^{\frac{1}{n}}}\right)}\frac{(x\log{a})^k}{(-1+a^{\frac{1}{n}})^kk!}e^{\frac{\lambda k}{n}}\\&=& a^{\frac{\left(-1+e^{\frac{\lambda}{n}}\right) x}{-1+a^{\frac{1}{n}}}}.
\end{eqnarray*}}
\end{lemma}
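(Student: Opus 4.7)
The plan is to recognize the sum as a Taylor series for an exponential function, closely paralleling the computation of $S_{n,a}^*(e_0;x) = 1$ in Lemma~\ref{L1}, only now with an extra factor $e^{\lambda k/n}$ per term.

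First I would pull the $k$-independent prefactor $a^{-x/(-1+a^{1/n})}$ out of the sum. Then I would combine the remaining $k$-dependent pieces into a single $k$-th power, writing
\begin{equation*}
\sum_{k=0}^{\infty} \frac{(x\log a)^k}{(-1+a^{1/n})^k k!}\, e^{\lambda k/n}
= \sum_{k=0}^{\infty} \frac{1}{k!}\left(\frac{x\log a \cdot e^{\lambda/n}}{-1+a^{1/n}}\right)^{k}.
\end{equation*}
This is the Maclaurin series for $\exp$, so it equals
\begin{equation*}
\exp\!\left(\frac{x\log a \cdot e^{\lambda/n}}{-1+a^{1/n}}\right) = a^{\frac{x\, e^{\lambda/n}}{-1+a^{1/n}}},
\end{equation*}
where in the last step I use the identity $e^{y\log a} = a^{y}$ with $y = x e^{\lambda/n}/(-1+a^{1/n})$.

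Finally, I would multiply back the prefactor $a^{-x/(-1+a^{1/n})}$ and combine exponents of the common base $a$:
\begin{equation*}
a^{-x/(-1+a^{1/n})} \cdot a^{x e^{\lambda/n}/(-1+a^{1/n})}
= a^{\frac{x(e^{\lambda/n}-1)}{-1+a^{1/n}}} = a^{\frac{(-1+e^{\lambda/n})x}{-1+a^{1/n}}},
\end{equation*}
which is the required identity. There is no real obstacle here; the only point requiring minor care is the bookkeeping of the exponents of $a$ at the end, and implicitly the convergence of the exponential series, which is absolute for every $\lambda\geq 0$ so that the interchange of sum and factoring out is justified for all $x\geq 0$.
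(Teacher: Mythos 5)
Your proof is correct and follows exactly the route the paper takes: the paper states Lemma~\ref{L2} without a separate proof, but the identical computation (factor out the prefactor, recognize the exponential series, combine exponents of base $a$) appears verbatim in its evaluation of $S_{n,a}^*(a^t;x)$ and $S_{n,a}^*(e^{-\alpha t};x)$ in the proof of Theorem~\ref{T1}. No issues.
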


\section{Convergence theorems}
\begin{theorem}\label{T1}
\em{Let $a>1$ then for all $x\in[0,\infty)$ we have 
\begin{eqnarray*}
&1.&S_{n,a}^*(f,x)~ \text{is linear and positive on}~ C_B[0,\infty),\\
&2.&S_{n,a}^*(a^t,x)=a^x,\\
&3.&\underset{n\rightarrow\infty}{\lim}S_{n,a}^*(f,x)\rightarrow f~ \text{uniformly on}~ [0,b],\;\ b>0,~ \text{provided}~ f\in C_\rho[0,\infty),\;\rho\geq 2,
\end{eqnarray*}
where $ C_B[0,\infty)$ is the space of all continuous and bounded functions defined on $[0,\infty)$.
}
\end{theorem}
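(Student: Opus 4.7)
For part (1), the plan is to read off linearity directly from the defining series \eqref{s}, since the coefficient of $f(k/n)$ in $S_{n,a}^*(f;x)$ does not depend on $f$. For positivity, I would observe that for $a>1$ and $x\ge 0$ one has $-1+a^{1/n}>0$ and $\log a>0$, so each factor $a^{-x/(-1+a^{1/n})}$, $(x\log a)^k$, $(-1+a^{1/n})^{-k}$ and $1/k!$ is nonnegative; therefore every coefficient of $f(k/n)$ is nonnegative and $f\ge 0$ implies $S_{n,a}^*(f;x)\ge 0$.

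For part (2), I would just specialize Lemma \ref{L2} with $\lambda=\log a$. Then $e^{\lambda/n}=e^{(\log a)/n}=a^{1/n}$, so the exponent appearing in Lemma \ref{L2} simplifies:
\begin{equation*}
\frac{\bigl(-1+e^{\lambda/n}\bigr)x}{-1+a^{1/n}}=\frac{\bigl(-1+a^{1/n}\bigr)x}{-1+a^{1/n}}=x,
\end{equation*}
which gives $S_{n,a}^*(a^t;x)=a^x$ immediately. No further estimates are required.

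For part (3), the plan is to invoke the Korovkin theorem on the test set $\{e_0,e_1,e_2\}$, which the excerpt has already singled out as a $K_+$-subset of $C_\rho[0,\infty)$ for $\rho\ge 2$. By Lemma \ref{L1}, $S_{n,a}^*(e_0;x)=1$ exactly, while for $e_1$ and $e_2$ the convergence reduces to the single scalar limit
\begin{equation*}
\lim_{n\to\infty} n\bigl(a^{1/n}-1\bigr)=\log a,
\end{equation*}
obtained by setting $t=1/n$ and using $\lim_{t\to 0^+}(a^t-1)/t=\log a$. Substituting this into the formulas in Lemma \ref{L1} gives $S_{n,a}^*(e_1;x)\to x$ and $S_{n,a}^*(e_2;x)\to x^2$, and on any compact interval $[0,b]$ the convergence is uniform because $x$ enters only through a bounded multiplicative factor. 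The hypothesis $f\in C_\rho[0,\infty)$ ensures we are in a space where the Korovkin test $\{e_0,e_1,e_2\}$ is actually decisive, so the conclusion follows.

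The only subtle point is the last one: uniform convergence on $[0,b]$ of $S_{n,a}^*(f;\cdot)$ for $f\in C_\rho$ is not entirely formal, because the operator uses values of $f$ at every node $k/n$, including those far outside $[0,b]$. I expect this to be the main obstacle, and I would handle it by appealing to the Korovkin-type theorem in weighted spaces $C_\rho[0,\infty)$ (as cited in the construction section via the isomorphism with $C[0,1]$), where verification of the three test-function limits above is exactly what is required.
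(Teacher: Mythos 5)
Your proposal is correct and follows essentially the same route as the paper: part (1) by inspection of the defining series, part (2) by the exponential series computation (the paper carries out directly the same calculation you get by specializing Lemma \ref{L2} to $\lambda=\log a$), and part (3) by the Korovkin-type theorem for the $K_+$-set $\{e_0,e_1,e_2\}$ in $C_\rho[0,\infty)$, $\rho\ge 2$, localized to $[0,b]$ via the restriction lattice homomorphism $F_b(f)=f|_{[0,b]}$. The scalar limit $n\bigl(a^{1/n}-1\bigr)\to\log a$ that you single out is exactly what the paper's verification of the test-function limits rests on.
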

\begin{theorem}\label{T2}
\em{Let $K$ be subspace of the Banach lattice $C_2[0,\infty)$ defined as 
\begin{eqnarray*}
K=\{f\in C[0,\infty): \underset{n\rightarrow +\infty}{\lim}{f(x)}\;\;\; \text{is finite}\}.
\end{eqnarray*}
Then $\underset{n\rightarrow\infty}{\lim} S_{n,a}^*(f)\rightarrow f$ uniformly on $[0,\infty)$ provided $f\in K.$
}
\end{theorem}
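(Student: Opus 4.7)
The plan is to invoke a general Korovkin-type theorem on the subspace $K$: since $K$ consists of continuous functions on $[0,\infty)$ admitting a finite limit at $+\infty$, the one-point compactification of $[0,\infty)$ identifies $K$ isometrically with $C[0,1]$, so $K$ is a Banach lattice on which the classical Korovkin machinery in the sense of [AF2] applies. A convenient Korovkin test subset of $K$ is $\{1,\;e^{-t},\;e^{-2t}\}$, so it suffices to verify that $S_{n,a}^*(f_i;\cdot)\to f_i$ \emph{uniformly} on $[0,\infty)$ for these three test functions.

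For $f_0\equiv 1$, uniform convergence is immediate from Lemma 3.1. For $f_\lambda(t):=e^{\lambda t}$ with $\lambda\in\{-1,-2\}$, Lemma 3.2 yields
\[
S_{n,a}^*(e^{\lambda t};x)=e^{\beta_n x},\qquad \beta_n:=\frac{(-1+e^{\lambda/n})\log a}{-1+a^{1/n}}.
\]
The Taylor expansions $-1+e^{\lambda/n}=\lambda/n+O(1/n^2)$ and $-1+a^{1/n}=(\log a)/n+O(1/n^2)$ give $\beta_n\to \lambda$, and pointwise convergence $S_{n,a}^*(e^{\lambda t};x)\to e^{\lambda x}$ follows immediately.

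The main obstacle is upgrading this pointwise convergence to uniform convergence on the \emph{unbounded} interval $[0,\infty)$. The mean value theorem furnishes
\[
\bigl|e^{\beta_n x}-e^{\lambda x}\bigr|\le |\beta_n-\lambda|\,x\,\max\!\bigl(e^{\beta_n x},\,e^{\lambda x}\bigr).
\]
Since $\lambda<0$ and $\beta_n\to \lambda$, for all $n$ sufficiently large both $\beta_n$ and $\lambda$ are bounded above by $\lambda/2<0$; then each of $x\,e^{\beta_n x}$ and $x\,e^{\lambda x}$ is uniformly bounded on $[0,\infty)$ by $2/(e|\lambda|)$ (because the maximum of $x\mapsto x\,e^{-c x}$ equals $1/(ec)$). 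Hence $\sup_{x\ge 0}\bigl|S_{n,a}^*(e^{\lambda t};x)-e^{\lambda x}\bigr|\to 0$. Combining the three required uniform convergences with the Korovkin theorem on $K$ yields $S_{n,a}^*(f)\to f$ uniformly on $[0,\infty)$ for every $f\in K$, completing the proof.
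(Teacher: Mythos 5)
Your proposal is correct and follows essentially the same route as the paper: the paper also reduces the statement to the uniform convergence of $S_{n,a}^*$ on the exponential test functions $e^{-\alpha t}$ (computed via Lemma 3.2) and then invokes the Korovkin-type theorem for the space of continuous functions with a finite limit at infinity (Proposition 4.2.5\,(7) of Altomare--Campiti), which is exactly the one-point-compactification argument you sketch. The only difference is that you actually prove the uniform convergence $\sup_{x\ge 0}\bigl|e^{\beta_n x}-e^{\lambda x}\bigr|\to 0$ via the mean value theorem and the bound $xe^{-cx}\le 1/(ec)$, a step the paper merely asserts with ``Observe that''.
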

\textbf{Proof of Theorem \ref{T1}}
1. By above operators (\ref{s}) we can see that  $S_{n,a}^*(f;x)$ is linear and positive.\\

2. Since we have the operators
\begin{eqnarray*}
S_{n,a}^*(f;x)&=&\sum\limits_{k=0}^{\infty}a^{\left(\frac{-x}{-1+a^{\frac{1}{n}}}\right)}\frac{(x\log{a})^k}{(-1+a^{\frac{1}{n}})^kk!}f\left(\frac{k}{n}\right)
\end{eqnarray*}
then
\begin{eqnarray*}
S_{n,a}^*(a^t;x)&=&\sum\limits_{k=0}^{\infty}a^{\left(\frac{-x}{-1+a^{\frac{1}{n}}}\right)}\frac{(x\log{a})^k}{(-1+a^{\frac{1}{n}})^kk!}a^{\frac{k}{n}}\\&=& a^{\left(\frac{-x}{-1+a^{\frac{1}{n}}}\right)}\sum\limits_{k=0}^{\infty}\frac{(xa^{\frac{1}{n}}\log{a})^k}{(-1+a^{\frac{1}{n}})^kk!}\\&=& a^{\left(\frac{-x}{-1+a^{\frac{1}{n}}}\right)}\exp\left(\frac{xa^{\frac{1}{n}}\log{a}}{-1+a^{\frac{1}{n}}}\right).
\end{eqnarray*}

3. Before proving the third result or Theorem (\ref{T2}), we wish to generalize some properties, since we have to prove uniform convergence of the operators over any compact set $[0,b],\; b>0$ and extensively on $[0,\infty)$, then for $b>0$ (fixed), consider the homomorphism $F_b:C[0,\infty)\rightarrow C[0,b]$ (lattice homomorphism) defined as $F_b(f)=f|_{[0,b]}$ for every $f\in C[0,\infty).$ Moreover, in this case we can see that
 \begin{eqnarray}\label{c'}
\underset{n\rightarrow\infty}{\lim}F_b( S_{n,a}^*(e_i))=F_b(e_i),~~~~ \forall \; i=0,1,2~~ \text{and}~~~ 
  \underset{n\rightarrow\infty}{\lim}F_b( S_{n,a}^*(a^t))=F_b(a^x),~~~~ a>1.
\end{eqnarray} 
 
Now, the well known Korovkin type property with respect to monotone operators ( Theorem 4.1.4 (vi) of [\cite{AF2}, p. 199]) is used. Let $H$ be a cofinal subspace of $C(Y)$, where $Y$ is a compact set. If  $S: C(Y)\rightarrow C_2[0,\infty)$ is a lattice homomorphism and if $\{L_n\}$ is a sequence of  linear positive  operators from $C(Y)$ into $C_2[0,\infty)$ such that $ \underset{n\rightarrow\infty}{\lim}L_n(h)=S(h)$,\; $\forall$  $h\in H$, then $ \underset{n\rightarrow\infty}{\lim}L_n(f)=f$ provided that $f$ belongs to the Korovkin closure of $H$. \\
 
Hence, by (\ref{c'}) and above properties, we have (3) of Theorem (\ref{T1}) but extensively to get uniform convergence on $[0,\infty)$  of the sequence  $\{S_{n,a}^*(f)\},$ we have to add some other  properties.\\
 
 $K$ is a subspace of the Banach lattice $C_2[0,\infty)$ endowed with the sup-norm.\\
 
 For a given $\alpha>0$, let us suppose that the function $f_{\alpha}(x)=a^{-\alpha x},\; (a>1\; \text{and}\; x\geq 0)$, then we have
\begin{eqnarray*}
S_{n,a}^*(f_\alpha;x)&=&\sum\limits_{k=0}^{\infty}a^{\left(\frac{-x}{-1+a^{\frac{1}{n}}}\right)}\frac{(x\log{a})^k}{(-1+a^{\frac{1}{n}})^kk!}a^{\frac{-\alpha k}{n}}\\&=&
a^{\left(\frac{-x}{-1+a^{\frac{1}{n}}}\right)}\sum\limits_{k=0}^{\infty}\frac{(xa^{\frac{-\alpha }{n}}\log{a})^k}{(-1+a^{\frac{1}{n}})^kk!}\\&=& a^{\left(\frac{-x}{-1+a^{\frac{1}{n}}}\right)}\exp\left(\frac{(xa^{\frac{-\alpha }{n}}\log{a})}{(-1+a^{\frac{1}{n}})}\right)\\&=& a^{\left(\frac{-x}{-1+a^{\frac{1}{n}}}\right)}a^{\left(\frac{xa^{\frac{-\alpha}{n}}}{-1+a^{\frac{1}{n}}}\right)}\\ &=& a^{\left(\frac{x(-1+a^{\frac{-\alpha}{n}})}{-1+a^{\frac{1}{n}}}\right)}.
\end{eqnarray*}  
On the other hand, if we consider $f_{\alpha}=\exp(-\alpha x),\;\ x\geq 0,$ then with the help of Lemma (\ref{L2}), we have
\begin{eqnarray*}
S_{n,a}^*(f_{\alpha};x)=a^{\frac{\left(-1+e^{\frac{-\alpha}{n}}\right) x}{-1+a^{\frac{1}{n}}}}.
\end{eqnarray*} 
 Observe that 
 \begin{eqnarray*}
 \underset{n\rightarrow \infty}{\lim}S_{n,a}^*(f_{\alpha})=f_{\alpha}~~~ \text{uniformly on}~~ [0,\infty).
\end{eqnarray*}  
 Hence using this limit and applying the Proposition  4.2.5-(7) of (\cite{AF2}, p.215), we can obtain Theorem (\ref{T2}). 
 
\section{Error estimation}
In this section, we will  compute the rate of convergence of the given operators (\ref{s}) and other equalities.\\

Let us define a function $\phi_x(t),\; x\geq0$ by $\phi_x(t)=(t-x)$ then by Lemma \ref{L1}, we can  get the following results:

\begin{lemma}\label{L3}\em{
For each $x\geq 0,$ we have:
\begin{eqnarray*}
1.~ S_{n,a}^*(\phi_x(t);x)&=&-\frac{1}{\left(-1+a^{\frac{1}{n}}\right)n}\left(x(-n+a^{\frac{1}{n}}n-\log{a})\right),\\
2.~ S_{n,a}^*(\phi_x^2(t);x)&=&\frac{1}{\left(-1+a^{\frac{1}{n}}\right)^2n^2}x\left(\left(-1+a^{\frac{1}{n}}\right)^2n^2x-\left(-1+a^{\frac{1}{n}}\right)(-1+2nx)\log{a}+x(\log{a})^2 \right),\\
3.~ S_{n,a}^*(\phi_x^3(t);x)&=&\left(\frac{x}{(-1+a^{\frac{1}{n}})^3n^3}\right)(({-1+a^{\frac{1}{n}}})^3n^3x^2-({-1+a^{\frac{1}{n}}})^2(1-3nx+3n^2x^2)\log{a}\\
&+& 3({-1+a^{\frac{1}{n}}})x(-1+nx)(\log{a})^2-x^2(\log{a})^3),\\
4.~ S_{n,a}^*(\phi_x^4(t);x)&=&\left(\frac{x}{(-1+a^{\frac{1}{n}})^4n^4}\right)(({-1+a^{\frac{1}{n}}})^4n^4x^3\\&&- ({-1+a^{\frac{1}{n}}})^3(-1+4nx-6n^2x^2+4n^3x^3)\log{a}\\&&+({-1+a^{\frac{1}{n}}})^2x(7-12nx+6n^2x^2)(\log{a})^2\\ &&- 2({-1+a^{\frac{1}{n}}})x^2(-3+2nx (\log{a})^3+x^3(\log{a})^4).
\end{eqnarray*}}
\end{lemma}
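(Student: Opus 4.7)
The plan is to prove Lemma \ref{L3} by a direct binomial expansion of $\phi_x^k(t) = (t-x)^k$ for $k = 1,2,3,4$, followed by an application of the linearity of $S_{n,a}^*$ and substitution of the explicit moments from Lemma \ref{L1}.

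First I would expand
\begin{eqnarray*}
(t-x)^k = \sum_{j=0}^{k} \binom{k}{j} (-x)^{k-j} \, e_j(t),
\end{eqnarray*}
and then, using linearity of $S_{n,a}^*$ together with $S_{n,a}^*(e_0;x)=1$, rewrite
\begin{eqnarray*}
S_{n,a}^*(\phi_x^k(t);x) = \sum_{j=0}^{k} \binom{k}{j} (-x)^{k-j} \, S_{n,a}^*(e_j;x).
\end{eqnarray*}
For $k=1,2,3,4$ this produces four finite sums, each a polynomial in $x$ whose coefficients are the rational functions in $a^{1/n}-1$, $\log a$, and $n$ provided by Lemma \ref{L1}.

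Next I would clear denominators by writing every term over the common denominator $(-1+a^{1/n})^k n^k$, which matches the prefactor appearing in each asserted formula. For $k=1$, only $S_{n,a}^*(e_1;x)$ and $x \, S_{n,a}^*(e_0;x)$ are involved, and the subtraction $\tfrac{x\log a}{(-1+a^{1/n})n} - x$ immediately yields the claimed $-\tfrac{x(-n+a^{1/n}n - \log a)}{(-1+a^{1/n})n}$. For $k=2,3,4$ the same procedure applies: substitute the expressions of $S_{n,a}^*(e_j;x)$ for $j \le k$ from Lemma \ref{L1}, combine over the common denominator $(-1+a^{1/n})^k n^k$, and collect by powers of $x$ and of $\log a$.

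The main obstacle is purely bookkeeping: the $k=3$ and especially $k=4$ cases involve nine and sixteen monomials in $x$, $\log a$, and $(-1+a^{1/n})$ before collection, and many of these cancel or combine in ways that are easy to mis-sign. I would carry out each case by first writing the raw sum, then regrouping coefficients of $x$, $x^2$, $x^3$, and (for $k=4$) $x^4$ separately, and checking the top-order coefficient (which must be $\binom{k}{k}(-x)^0 \, S_{n,a}^*(e_k;x)$'s leading contribution minus $\binom{k}{k-1}(-x) S_{n,a}^*(e_{k-1};x)$'s contribution, etc.) against the claimed formula. As a sanity check on the final answers, I would verify that letting $n \to \infty$, so that $\tfrac{\log a}{-1+a^{1/n}} \to n$ in the sense that $s_n(x) \to x$ by Remark 2.1, recovers the known central moments of the classical Sz\'asz-Mirakjan operator, in particular $S_{n,a}^*(\phi_x^2;x) \sim x/n$, $S_{n,a}^*(\phi_x^3;x) \sim x/n^2$ and $S_{n,a}^*(\phi_x^4;x) \sim 3x^2/n^2$ to leading order.
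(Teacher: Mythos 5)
Your proposal is correct and follows essentially the same route as the paper: the authors likewise expand $\left(\tfrac{k}{n}-x\right)^j$ and reduce each central moment to the raw moments of Lemma \ref{L1} (computing the resulting sums term by term), proving cases $1$ and $2$ explicitly and leaving $3$ and $4$ to the same bookkeeping. Your added sanity check via the $n\to\infty$ asymptotics is a sensible extra safeguard but does not change the argument.
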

\begin{proof}
Since for each $x\geq 0$, we have
\begin{eqnarray*}
1.~S_{n,a}^*(\phi_x(t);x)&=& \sum\limits_{k=0}^{\infty}a^{\left(\frac{-x}{-1+a^{\frac{1}{n}}}\right)}\frac{(x\log{a})^k}{(-1+a^{\frac{1}{n}})^kk!}\left(\frac{k}{n}-x\right)\\&=& \frac{a^{\left(\frac{-x}{-1+a^{\frac{1}{n}}}\right)}}{n}\sum\limits_{k=0}^{\infty}\frac{(x\log{a})^k}{(-1+a^{\frac{1}{n}})^k(k-1)!}-xa^{\left(\frac{-x}{-1+a^{\frac{1}{n}}}\right)}\sum\limits_{k=0}^{\infty}\frac{(x\log{a})^k}{(-1+a^{\frac{1}{n}})^kk!}\\&=& \frac{a^{\left(\frac{-x}{-1+a^{\frac{1}{n}}}\right)}}{n}\frac{(x\log{a})}{(-1+a^{\frac{1}{n}})}a^{\left(\frac{x}{-1+a^{\frac{1}{n}}}\right)}- xa^{\left(\frac{-x}{-1+a^{\frac{1}{n}}}\right)} a^{\left(\frac{x}{-1+a^{\frac{1}{n}}}\right)}\\ &=& \frac{(x\log{a})}{(-1+a^{\frac{1}{n}})n}-x.\\
2.~S_{n,a}^*(\phi_x^2(t);x)&=& \sum\limits_{k=0}^{\infty}a^{\left(\frac{-x}{-1+a^{\frac{1}{n}}}\right)}\frac{(x\log{a})^k}{(-1+a^{\frac{1}{n}})^kk!}\left(\frac{k}{n}-x\right)^2\\&=& \frac{a^{\left(\frac{-x}{-1+a^{\frac{1}{n}}}\right)}}{n^2}\sum\limits_{k=0}^{\infty}\frac{(x\log{a})^kk^2}{(-1+a^{\frac{1}{n}})^kk!}-\frac{2xa^{\left(\frac{-x}{-1+a^{\frac{1}{n}}}\right)}}{n}\sum\limits_{k=0}^{\infty}\frac{(x\log{a})^kk}{(-1+a^{\frac{1}{n}})^kk!}\\&&+ x^2a^{\left(\frac{-x}{-1+a^{\frac{1}{n}}}\right)}\sum\limits_{k=0}^{\infty}\frac{(x\log{a})^k}{(-1+a^{\frac{1}{n}})^kk!}\\&=& \frac{a^{\left(\frac{-x}{-1+a^{\frac{1}{n}}}\right)}}{n^2}\left(\frac{(x\log{a})}{(-1+a^{\frac{1}{n}})}+\left(\frac{(x\log{a})}{(-1+a^{\frac{1}{n}})}\right)^2\right)a^{\left(\frac{x}{-1+a^{\frac{1}{n}}}\right)}\\&&- \frac{2xa^{\left(\frac{-x}{-1+a^{\frac{1}{n}}}\right)}}{n}\left(\frac{(x\log{a})}{(-1+a^{\frac{1}{n}})n}\right)a^{\left(\frac{x}{-1+a^{\frac{1}{n}}}\right)}+x^2\\&=& \frac{x}{\left(-1+a^{\frac{1}{n}}\right)^2 n^2}((-1+a^{\frac{1}{n}})\log{a}+x(\log{a})^2-2xn\log{a}(-1+a^{\frac{1}{n}})\\&&+ n^2x(-1+a^{\frac{1}{n}})^2 )\\&=&\frac{x}{\left(-1+a^{\frac{1}{n}}\right)^2n^2}(\left(-1+a^{\frac{1}{n}}\right)^2n^2x-\left(-1+a^{\frac{1}{n}}\right)(-1+2nx)\log{a}\\&&+x(\log{a})^2).
\end{eqnarray*}
Similarly,  the other identities can be proved.
\end{proof}

Let $f\in C_B[0,\infty)$  and $x\geq 0.$ Then the modulus of continuity of $f$ is defined to be 
\begin{eqnarray*}
\omega(f,\delta)=\underset{|t-x|\leq \delta}{\sup}|f(t)-f(x)|,\;\;\;\;\;\;\;\;\;\ t\in[0,\infty).
\end{eqnarray*}

Based on the modulus of continuity, we have the theorem:  
\begin{theorem}\em{
For every $f\in C_B[0,\infty)$, the space of all continuous and bounded functions defined on $[0,\infty)$, $x\geq 0$ and $n\in \mathbb{N}$, we have 
\begin{eqnarray*}
|S_{n,a}^*(f;x)-f(x)|\leq 2\omega(f,\delta_{n,x}),
\end{eqnarray*}
 where $\delta_{n,x}=\sqrt{l}$ and the value of $l$ is given by (2)  of Lemma \ref{L3}. } 
\end{theorem}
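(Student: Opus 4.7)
The plan is to use the standard Shisha--Mond style estimate for positive linear operators reproducing constants, combined with the Cauchy--Schwarz inequality and the well-known elementary property of the modulus of continuity.

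First I would exploit the fact that $S_{n,a}^*(e_0;x)=1$ (part (1) of Lemma \ref{L1}) together with the linearity of $S_{n,a}^*$ to write
\begin{eqnarray*}
|S_{n,a}^*(f;x)-f(x)| &=& \bigl|S_{n,a}^*\bigl(f(t)-f(x);x\bigr)\bigr| \\
&\leq & S_{n,a}^*\bigl(|f(t)-f(x)|;x\bigr),
\end{eqnarray*}
where the last inequality uses positivity of the operator. Next, I would invoke the standard property of the modulus of continuity, namely
\begin{eqnarray*}
|f(t)-f(x)| \;\leq\; \omega(f,\delta)\left(1+\frac{|t-x|}{\delta}\right), \qquad \delta>0,
\end{eqnarray*}
valid for any $f\in C_B[0,\infty)$. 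Applying $S_{n,a}^*$ term by term (again using linearity, positivity, and $S_{n,a}^*(e_0;x)=1$) yields
\begin{eqnarray*}
|S_{n,a}^*(f;x)-f(x)| \;\leq\; \omega(f,\delta)\left(1+\frac{1}{\delta}\,S_{n,a}^*\bigl(|t-x|;x\bigr)\right).
\end{eqnarray*}

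The remaining step is to handle the first absolute moment $S_{n,a}^*(|t-x|;x)$. I would use the Cauchy--Schwarz inequality for positive linear functionals, namely
\begin{eqnarray*}
S_{n,a}^*\bigl(|t-x|;x\bigr) \;\leq\; \sqrt{S_{n,a}^*(e_0;x)}\,\sqrt{S_{n,a}^*\bigl((t-x)^2;x\bigr)} \;=\; \sqrt{l},
\end{eqnarray*}
where $l$ is precisely the expression computed in part (2) of Lemma \ref{L3}. Choosing $\delta=\delta_{n,x}=\sqrt{l}$ collapses the bracket to $1+1=2$, and the asserted inequality $|S_{n,a}^*(f;x)-f(x)|\leq 2\,\omega(f,\delta_{n,x})$ follows.

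None of the steps presents a genuine obstacle: this is the classical Popoviciu/Shisha--Mond argument, and the only ingredient specific to the present operator is the explicit formula for the second central moment already established in Lemma \ref{L3}. The only minor point to be careful about is justifying the Cauchy--Schwarz step for the infinite series defining $S_{n,a}^*$, which follows from absolute convergence of the series for $f\in C_B[0,\infty)$ (all terms are dominated by $\|f\|_\infty$ times the basis functions, whose sum is $1$).
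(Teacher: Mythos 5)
Your proposal is correct and follows essentially the same route as the paper's own proof: the modulus-of-continuity inequality $|f(t)-f(x)|\leq\omega(f,\delta)\left(1+\frac{|t-x|}{\delta}\right)$, positivity and $S_{n,a}^*(e_0;x)=1$, Cauchy--Schwarz applied to the first absolute moment, and the choice $\delta_{n,x}=\sqrt{l}$ with $l$ from Lemma \ref{L3}. Your added remark about justifying Cauchy--Schwarz for the infinite series is a point the paper glosses over, but otherwise the arguments coincide.
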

\begin{proof}
Let $f\in C_B[0,\infty)$ and $x\geq 0$, we have
\begin{eqnarray*}
|f(t)-f(x)|&\leq & \omega(f,\delta_{n,x})\left(\frac{|t-x|}{\delta_{n,x}}+1\right), 
\end{eqnarray*}
now applying the operators $S_{n,a}^*$ to both sides, we have
\begin{eqnarray*}
S_{n,a}^*|f(t)-f(x)|& \leq & \omega(f,\delta_{n,x})S_{n,a}^*\left(\frac{|t-x|}{\delta_{n,x}}+1\right)\\&\leq & \omega(f,\delta_{n,x})\left(\frac{S_{n,a}^*|t-x|}{\delta_{n,x}}+S_{n,a}^*1\right)\\&=& \omega(f,\delta_{n,x})\left(\delta_{n,x}^{-1}\; S_{n,a}^*|t-x|+1\right)\\ &\leq & \omega(f,\delta_{n,x})\left(\delta_{n,x}^{-1}\;\sqrt{S_{n,a}^*(t-x)^2}+1\right),~~(\text{Using the Cauchy-Schwarz Inequality})    ~ \\&=& \omega(f,\delta_{n,x})\left(\delta_{n,x}^{-1}\;\sqrt{l}+1\right),
\end{eqnarray*}
where $l$ is given by (2) of lemma (\ref{L3}).
\begin{eqnarray*}
&=&\omega(f,\delta_{n,x})\left(\delta_{n,x}^{-1}\;\delta_{n,x}+1\right),~~~~~~~~~~~~~~~ where~~ \delta_{n,x}=\sqrt{l}\\ &\leq & 2\omega(f,\delta_{n,x}).
\end{eqnarray*}
\end{proof}

\section{Example, Graphical approach and analysis}

In this section, we shall discuss the convergence properties by graphs of the defined operators (\ref{s}) to given functions, and analyze the rate of convergence of the defined operators. At last, we shall show that the rate of convergence of the defined operators is better than Sz$\acute{\text{a}}$sz-Mirakjan operators $S_n(f;x)$.

\begin{example}\label{Ex1} 
Let $f(x)=x(x-\frac{1}{3})(x-\frac{1}{4})$, choosing $a=1.5,~ n=1~ \text{to}~10~\text{and}\,n=100,~ 500$ then the convergence to the function $f(x)$ by the operators  $S_{n,a}^*(f;x)$ are  illustrated in the figures. In that order, we shall see, the convergence of the operators $S_{n,a}^*(f;x)$ to the function $f(x)$ for particular value of $n=10$ for different values of $a>1$.
\end{example}

%%%%%%%%%%%%%%%%%%%%%%%%%%%%%%%%%%%%%%%%%%%%%%%

\begin{figure}[htb]
 \centering 
  \includegraphics[width=0.32\textwidth]{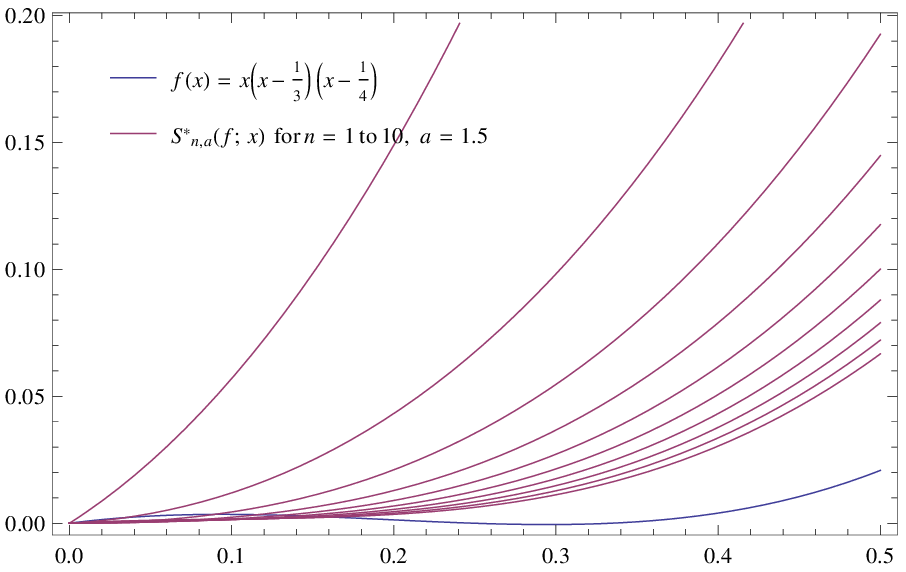}    \includegraphics[width=0.32\textwidth]{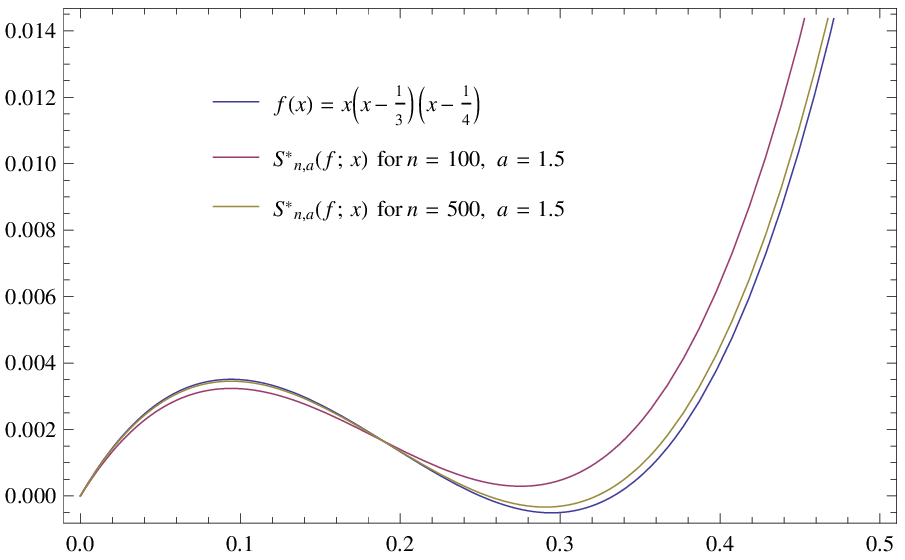}    \includegraphics[width=0.32\textwidth]{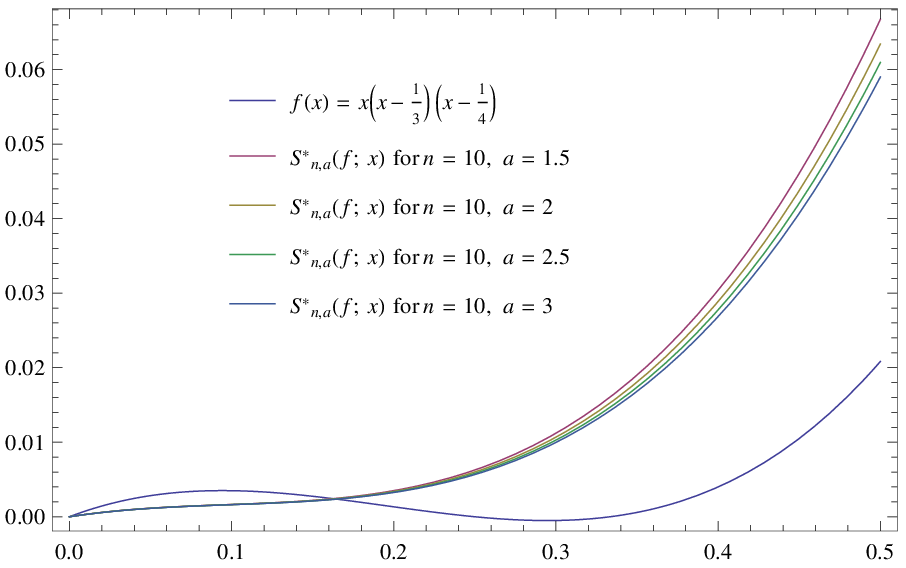} 
    \caption[Description in LOF, taken from~\cite{source}]{Convergence of the operators $S_{n,a}^*(f;x)$ to $f(x)$}
    \label{F1}
\end{figure}

\begin{figure}[htb]
 \centering 
   \includegraphics[width=0.4\textwidth]{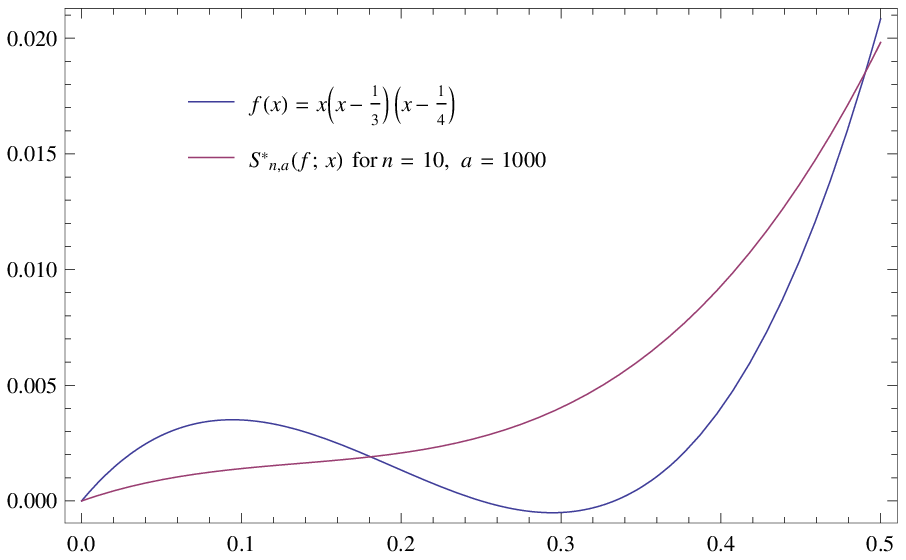}     \includegraphics[width=0.4\textwidth]{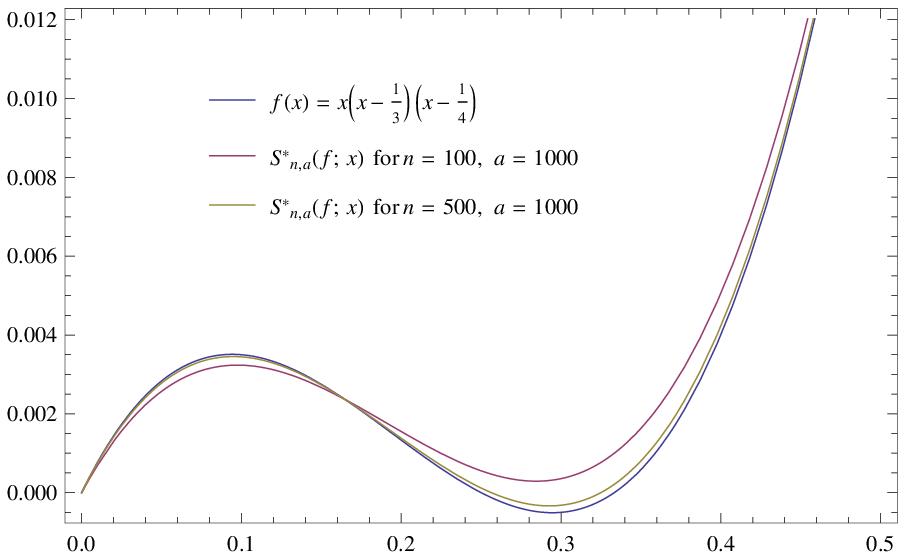} 
    \caption[Description in LOF, taken from~\cite{source}]{Convergence of the operators $S_{n,a}^*(f;x)$ to $f(x)$}
    \label{F2}
\end{figure}

\begin{example}\label{Ex2}
For $n=5,~30,~300$, the comparison of  convergence of the defined operators    $S_{n,a}^*(f;x)$ and Sz$\acute{\text{a}}$sz-Mirakjan operators   $S_n(f;x)$ to $f(x)=x^2\exp{4x}$ for fix value of $a$ are shown in figures. But for same function $f(x)$ the comparison is also take place by choosing different values of $a>1$ as $a=15,~150,~1500$.
%When we choose $n=10,~25,~500$, and for the same  function $f(x)$ then the  approximation to the  function $f(x)$ by  $S_{n,a}^*(f;x)$ and $S_n(f;x)$ are take place. The following figures are showing a comparison of the operators $S_{n,a}^*(f;x)$ and $S_n(f;x)$ in the sense of rate of convergence.
\end{example}

%%%%%%%%%%%%%%%%%%%%%%%%%%%%%%%%%%%%%%%%%%%%%%%
\begin{figure}[htb]
    \centering 
  \includegraphics[width=0.24\textwidth]{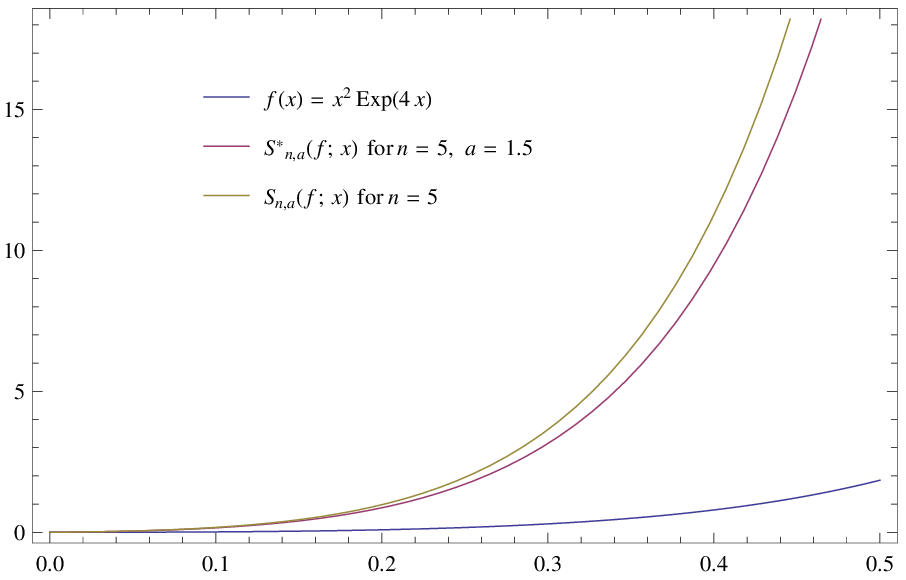}     \includegraphics[width=0.24\textwidth]{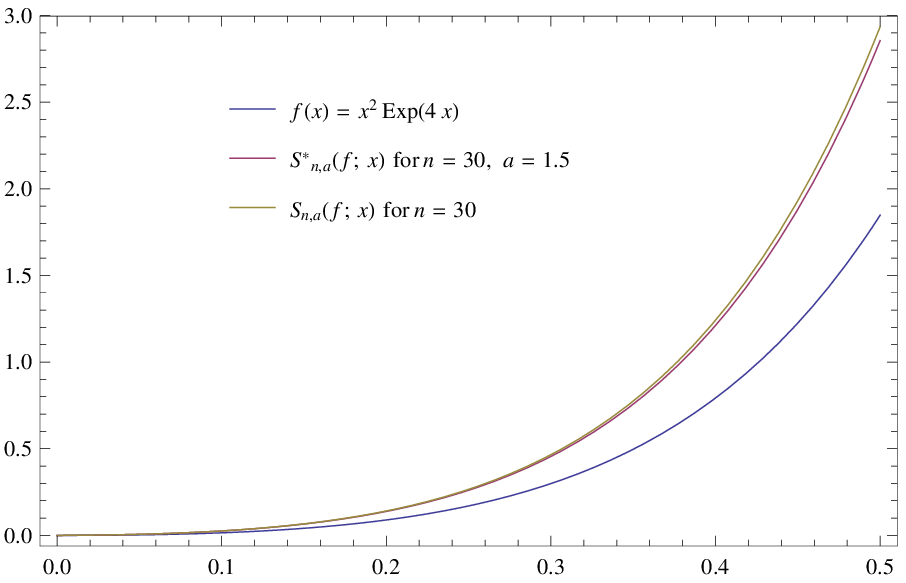}     \includegraphics[width=0.24\textwidth]{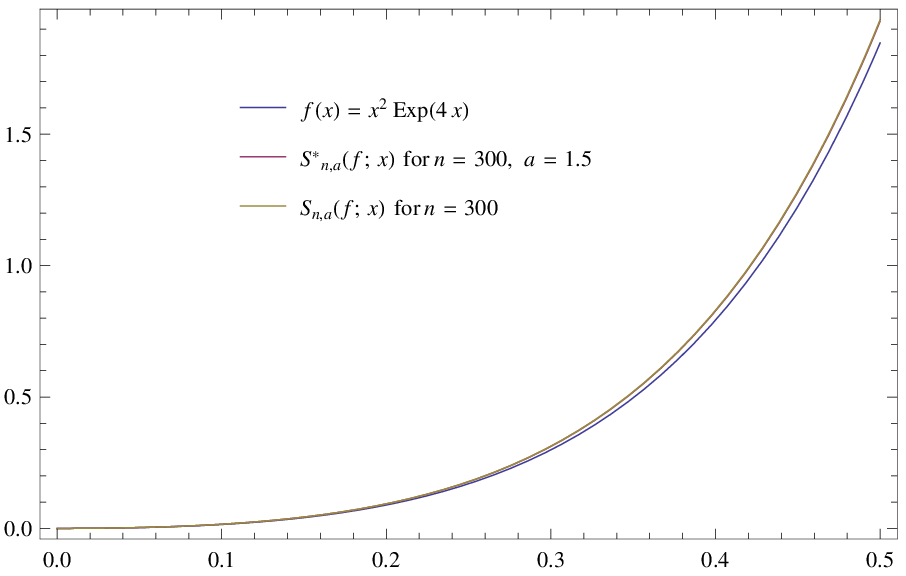}  
      \caption[Description in LOF, taken from~\cite{source}]{Comparison of the operators $\tilde{S}_{n,a}^*(f;x)$ and $S_n(f;x)$}
    \label{F3}
\end{figure}

\begin{figure} [h!]
    \centering %  \includegraphics[width=0.4\textwidth]{R9.eps}    
   \includegraphics[width=0.32\textwidth]{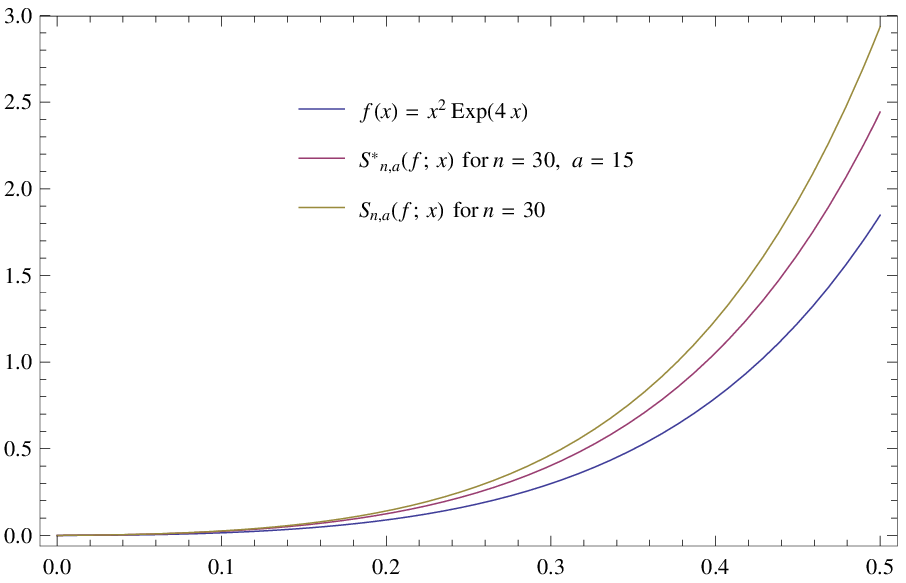}     \includegraphics[width=0.32\textwidth]{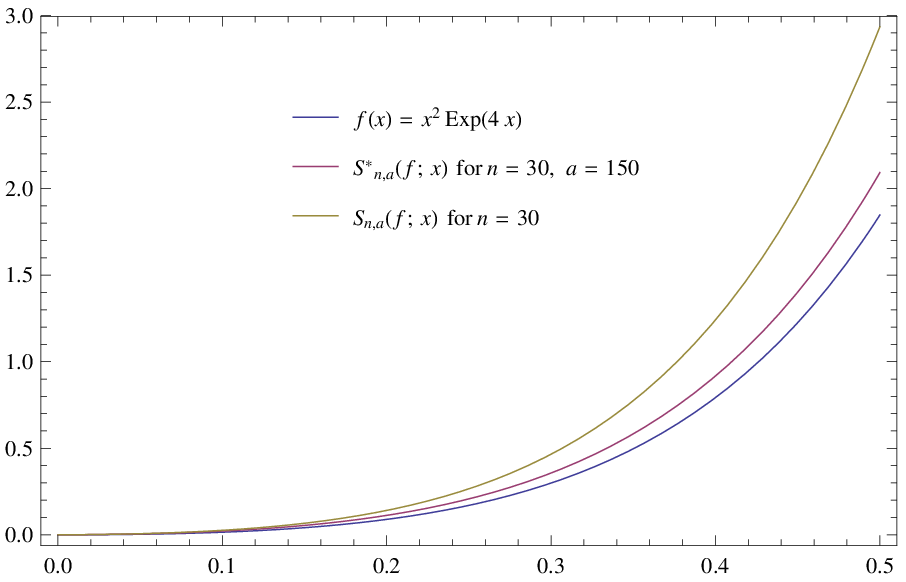}     \includegraphics[width=0.32\textwidth]{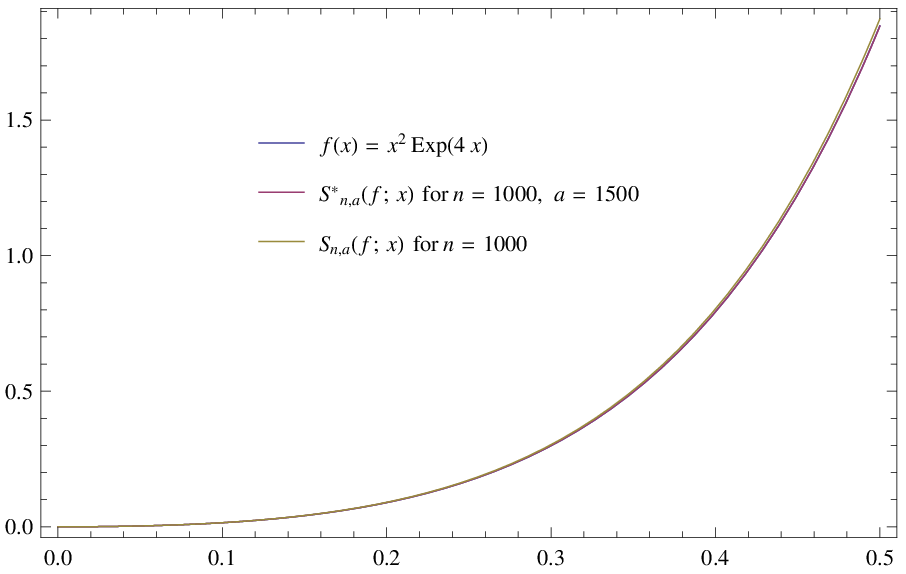}   
      \caption[Description in LOF, taken from~\cite{source}]{Comparison of the operators $\tilde{S}_{n,a}^*(f;x)$ and $S_n(f;x)$}
    \label{F4}
\end{figure}

\newpage
\begin{example}\label{Ex3}
Let $f(x)=x^2\cos{4x}$ and $a=1500$, for $n=10,~1000$, the comparison of  convergence of  $S_{n,a}^*(f;x)$ and Sz$\acute{\text{a}}$sz-Mirakjan operators  $S_n(f;x)$  are illustrated in figures.
\end{example}

\begin{figure} [h!]
    \centering 
  \includegraphics[width=0.4\textwidth]{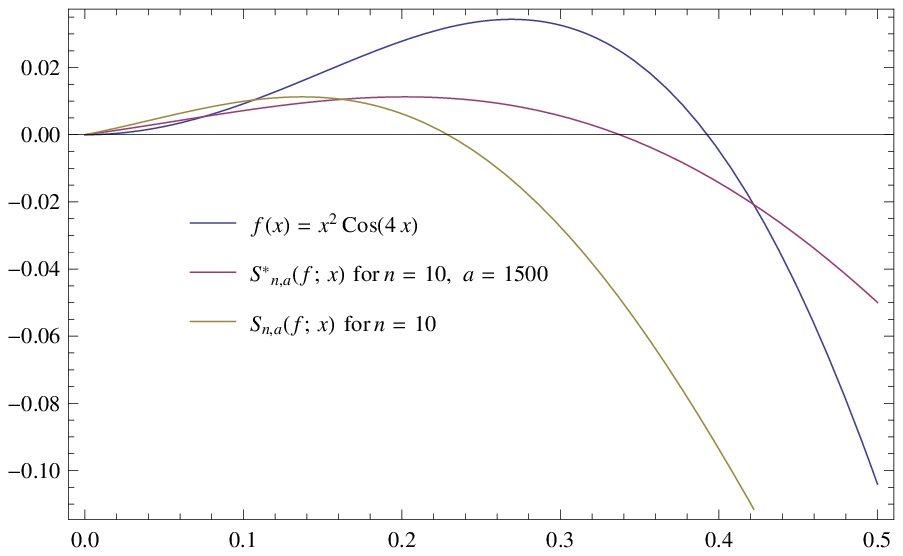}     \includegraphics[width=0.4\textwidth]{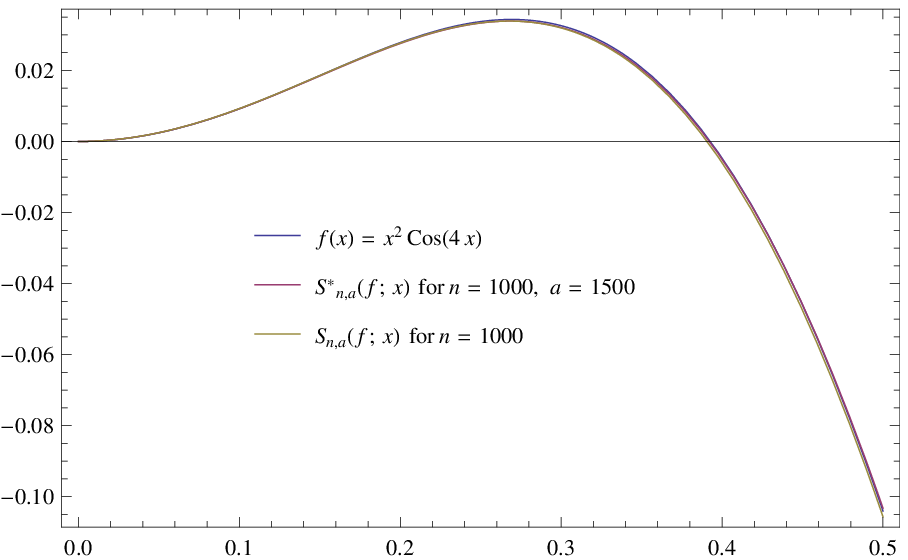}   
      \caption[Description in LOF, taken from~\cite{source}]{Comparison of the operators $S_{n,a}^*(f;x)$ and $S_n(f;x)$}
    \label{F5}
\end{figure}

%%%%%%%%%%%%%%%%%%%%%%%%%%%%%%%%%%%%%%%%%%%%%%%
\textbf{Conclusion}  The convergence of the operators $S_{n,a}^*(f;x)$ to the given function $f(x)$ is shown by above figures by taking different values of $n$ and fix $a$. The rate of convergence is also effected by taking different values of  $a$, it means rate of convergence is also depend on $a>1$, it can be observed by Figure \ref{F1}, additionally, in figure \ref{F2}, for fixing $a$ as $1000$, the convergence of the defined operators is took place. In Example  \ref{Ex2}, we put a comparison between $ S_{n,a}^*(f;x)$ and Sz$\acute{\text{a}}$sz-Mirakjan operators  $S_n(f;x)$ in sense of rate of convergence by both cases as one of them $a>1$ is fix and in other case (in Figure \ref{F4})  by taking different values of $a>1$ (even though fixed), we compute the rate of convergence and for large value of $n$ these operators converge to the function $f(x)$. For $n=10,~1000$, the rate  of convergence of the defined operators is good as Sz$\acute{\text{a}}$sz-Mirakjan operators  $S_n(f;x)$, it can be seen in above figures (\ref{F3}, \ref{F4}, \ref{F5}).

\section{A Voronovskaya-type theorem}
To examine the asymptotic behavior of the operators (\ref{s}), we will prove a  Voronovskaya-type theorem for the operators $S_{n,a}^*$ .  Before going to the main Theorem \ref{MT}, we have the lemma:

\begin{lemma}\em{
$\underset{n\rightarrow\infty}{\lim}n^2\;S_{n,a}^*(\phi_x^4(t);x)=3x^2,$ uniformly for each $x\in[0,b],\; b>0.$}
\end{lemma}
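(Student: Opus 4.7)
My plan is to start directly from the explicit closed form for $S_{n,a}^*(\phi_x^4(t);x)$ already established in item (4) of Lemma \ref{L3}, multiply through by $n^2$, and then expand everything in powers of $1/n$. Writing $L = \log a$ and $b_n = a^{1/n}-1$, the key quantitative input is the Taylor expansion
\begin{equation*}
n\,b_n \;=\; n\bigl(e^{L/n}-1\bigr) \;=\; L + \frac{L^2}{2n} + \frac{L^3}{6n^2} + O(1/n^3),
\end{equation*}
which gives a corresponding expansion for $A:=L/(nb_n)$, namely $A = 1 - \frac{L}{2n} + \frac{L^2}{12n^2} + O(1/n^3)$. From $\ln A = -L/(2n) - L^2/(24n^2) + O(1/n^3)$ I get, for each $k \in \{1,2,3,4\}$,
\begin{equation*}
A^k \;=\; 1 - \frac{kL}{2n} + \frac{k(3k-1)L^2}{24\,n^2} + O(1/n^3),
\end{equation*}
with the implied constants uniform in $x$ on $[0,b]$.

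Next, I rewrite each of the five summands inside the bracket of Lemma \ref{L3}(4), divided by $b_n^4 n^4$, in terms of the powers $A,A^2,A^3,A^4$. Concretely, $\frac{L^k}{b_n^k\, n^k}=A^k$, so terms like $\frac{x L}{b_n n}$, $\frac{x^2 L^2}{b_n^2 n^2}$, etc., become polynomial expressions in $A^k$ multiplied by a power of $1/n$. I then substitute the expansion of $A^k$ and collect the contributions at order $n^0$, $n^{-1}$ and $n^{-2}$. The three cancellations to verify are: (i) the $n^0$ coefficient is $1-4+6-4+1=0$; (ii) the $n^{-1}$ coefficient splits into an $x^4 L$ part with coefficient $2-6+6-2=0$ and an $x^3$ part with coefficient $6-12+6=0$; (iii) at order $n^{-2}$ the $x^4 L^2$ coefficient collapses to $-\tfrac13+\tfrac52-4+\tfrac{11}{6}=0$, the $x^3 L$ coefficient to $-3+12-9=0$, and the $x^2$ coefficient to $-4+7=3$. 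Hence $n^2 S_{n,a}^*(\phi_x^4(t);x) = 3x^2 + O(1/n)$ on $[0,b]$, which yields the lemma.

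The uniform convergence on $[0,b]$ is then automatic: every remainder in the expansions above is bounded by a polynomial in $x$ with coefficients depending only on $L$, so the $O(1/n)$ in the final equality is uniform once $x$ is restricted to any compact set.

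The main obstacle is purely computational bookkeeping: the Lemma \ref{L3}(4) formula is long, and getting the $3x^2$ requires matching \emph{three} non-trivial cancellations simultaneously at order $n^{-2}$. A cleaner alternative route, which I would mention as a sanity check, is to note that $S_{n,a}^*(f;x) = S_n(f;s_n(x))$ where $S_n$ is the classical Sz\'asz--Mirakjan operator; then binomially expanding $(t-x)^4 = \bigl((t-s_n(x)) + (s_n(x)-x)\bigr)^4$ and using the Poisson central moments together with $s_n(x)-x = -xL/(2n) + O(1/n^2)$ reduces the problem to verifying that only the leading $3\,s_n(x)^2/n^2$ term survives after multiplication by $n^2$, which confirms the result without the long cancellation count.
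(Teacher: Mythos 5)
Your proposal is correct, and in fact it supplies precisely the content that the paper omits: the paper's entire argument for this lemma consists of quoting the closed form from Lemma \ref{L3}(4) and then asserting the limit ``using Mathematica,'' with no analytic justification and no discussion of uniformity. You start from the same closed form, but you actually carry out the limit by hand via the expansion $nb_n = n(a^{1/n}-1) = \log a + \frac{(\log a)^2}{2n} + \frac{(\log a)^3}{6n^2} + O(n^{-3})$ and the resulting formula $A^k = 1 - \frac{k\log a}{2n} + \frac{k(3k-1)(\log a)^2}{24 n^2} + O(n^{-3})$ for $A = \log a/(n b_n)$; I checked the cancellations you list (the alternating binomial sum $1-4+6-4+1=0$ at order $n^0$, the order $n^{-1}$ sums $2-6+6-2=0$ and $6-12+6=0$, and at order $n^{-2}$ the sums $-\frac13+\frac52-4+\frac{11}{6}=0$, $-3+12-9=0$, and $-4+7=3$) and they are all right, so the bracket is $3x^2/n^2 + O(n^{-3})$ with constants uniform on $[0,b]$. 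One caveat: the displayed statement of Lemma \ref{L3}(4) contains a misplaced parenthesis in its last two terms, so you are implicitly using the corrected version $-2(a^{1/n}-1)x^2(-3+2nx)(\log a)^3 + x^3(\log a)^4$ that appears later in the paper; it would be worth saying so explicitly. Your alternative route --- writing $S_{n,a}^*(f;x)=S_n(f;s_n(x))$, expanding $(t-x)^4$ about $s_n(x)$, and using the classical Poisson central moments $S_n((t-y)^2;y)=y/n$, $S_n((t-y)^4;y)=3y^2/n^2+y/n^3$ together with $s_n(x)-x=O(1/n)$ --- is genuinely different, considerably cleaner, and also gives uniformity for free; it is arguably the proof the paper should have given.
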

\begin{proof}
By Lemma \ref{L3}, we have
\begin{eqnarray*}
S_{n,a}^*(\phi_x^4(t);x)&=&\left(\frac{x}{(-1+a^{\frac{1}{n}})^4n^4}\right)(({-1+a^{\frac{1}{n}}})^4n^4x^3\\ &&-({-1+a^{\frac{1}{n}}})^3(-1+4nx-6n^2x^2+4n^3x^3)\log{a}\\ &&+({-1+a^{\frac{1}{n}}})^2x(7-12nx+6n^2x^2)(\log{a})^2\\ 
&&-2({-1+a^{\frac{1}{n}}})x^2(-3+2nx)(\log{a})^3+x^3(\log{a})^4)
\end{eqnarray*}
Using Mathematica, we get
\begin{eqnarray*}
\underset{n\rightarrow\infty}{\lim} n^2\;S_{n,a}^*(\phi_x^4(t);x)&=& \underset{n\rightarrow\infty}{\lim}n^2\left(\frac{x}{(-1+a^{\frac{1}{n}})^4n^4}\right)(({-1+a^{\frac{1}{n}}})^4n^4x^3\\
&&-({-1+a^{\frac{1}{n}}})^3(-1+4nx-6n^2x^2+4n^3x^3)\log{a}\\&&+({-1+a^{\frac{1}{n}}})^2x(7-12nx+6n^2x^2)(\log{a})^2\\
&&-2({-1+a^{\frac{1}{n}}})x^2(-3+2nx)(\log{a})^3+x^3(\log{a})^4)\\&=& 3x^2.
\end{eqnarray*}
\end{proof}
\begin{theorem}\label{MT} \em{
Let $f',\; f''\in  C_\rho [0,\infty)$, for some $f\in C_\rho[0,\infty),~~\rho\geq 4$, then we have
\begin{eqnarray*}
\underset{n\rightarrow\infty}{\lim}n\{S_{n,a}^*(f,x)-f(x)\}=\frac{-x}{2}(f'(x)\log({a})-f''(x))
\end{eqnarray*}
uniformly,  $x\in [0,b],\; b>0.$} 
\end{theorem}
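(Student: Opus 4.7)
The plan is to run the classical Voronovskaya argument: Taylor expand $f$ to second order, apply the linear operator $S_{n,a}^*$, identify the first two moments, and show that the remainder is negligible after multiplying by $n$. Concretely, for fixed $x\in[0,b]$, write
\[
f(t)=f(x)+f'(x)(t-x)+\tfrac{1}{2}f''(x)(t-x)^{2}+h(t,x)(t-x)^{2},
\]
where the Peano remainder $h(\cdot,x)$ is continuous, bounded on $[0,\infty)$ because $f',f''\in C_\rho$, and satisfies $h(t,x)\to 0$ as $t\to x$. Applying $S_{n,a}^*$ and using linearity plus $S_{n,a}^*(e_0;x)=1$, I get
\[
n\bigl[S_{n,a}^*(f;x)-f(x)\bigr]=f'(x)\,nS_{n,a}^*(\phi_x;x)+\tfrac{1}{2}f''(x)\,nS_{n,a}^*(\phi_x^{2};x)+nS_{n,a}^*\bigl(h(t,x)\phi_x^{2};x\bigr).
\]

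The first two asymptotics come from direct manipulation of Lemma \ref{L3} together with the expansion $a^{1/n}-1=\tfrac{\log a}{n}+\tfrac{(\log a)^{2}}{2n^{2}}+O(n^{-3})$, which gives $\tfrac{\log a}{n(a^{1/n}-1)}=1-\tfrac{\log a}{2n}+O(n^{-2})$. Substituting into the closed forms for $S_{n,a}^*(\phi_x;x)$ and $S_{n,a}^*(\phi_x^{2};x)$ yields, after the cancellations of the leading $nx$ and $2nx^{2}$ terms,
\[
\lim_{n\to\infty} n\,S_{n,a}^*(\phi_x;x)=-\tfrac{x\log a}{2},\qquad \lim_{n\to\infty} n\,S_{n,a}^*(\phi_x^{2};x)=x,
\]
both uniformly in $x\in[0,b]$. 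Plugging these in produces exactly $-\tfrac{x}{2}(f'(x)\log a-f''(x))$, as required.

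The one genuinely delicate point is controlling the remainder $R_n(x):=nS_{n,a}^*(h(t,x)\phi_x^{2};x)$. I plan to handle it by the Cauchy--Schwarz inequality:
\[
\bigl|R_n(x)\bigr|\le n\sqrt{S_{n,a}^*\bigl(h(t,x)^{2};x\bigr)}\,\sqrt{S_{n,a}^*(\phi_x^{4};x)}=\sqrt{S_{n,a}^*\bigl(h(t,x)^{2};x\bigr)}\,\sqrt{n^{2}S_{n,a}^*(\phi_x^{4};x)}.
\]
The second factor is bounded, since the preceding lemma gives $n^{2}S_{n,a}^*(\phi_x^{4};x)\to 3x^{2}$ uniformly on $[0,b]$. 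For the first factor, the function $t\mapsto h(t,x)^{2}$ is continuous, bounded, and vanishes at $t=x$; by Theorem \ref{T1}(3), $S_{n,a}^*(h(\cdot,x)^{2};x)\to h(x,x)^{2}=0$ uniformly for $x\in[0,b]$. (If a fully uniform-in-$x$ statement of Theorem \ref{T1}(3) for this specific family is needed, I will argue it directly from the uniform continuity of $f'$ and $f''$ on compacta, which lets me choose $\delta$ independent of $x\in[0,b]$ and split the sum defining $S_{n,a}^*(h(t,x)^{2};x)$ into the region $|t-x|<\delta$, where $h$ is small, and $|t-x|\ge\delta$, where the Chebyshev-type bound via $S_{n,a}^*(\phi_x^{2};x)=O(1/n)$ kills the mass.) Combining, $R_n(x)\to 0$ uniformly on $[0,b]$, which completes the argument. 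The main obstacle is precisely this uniform control of the remainder; the moment asymptotics themselves are routine Taylor calculus on $a^{1/n}-1$.
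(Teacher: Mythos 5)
Your proposal is correct and follows essentially the same route as the paper: second-order Taylor expansion with a Peano remainder, identification of the first and second central moments (your series expansion of $a^{1/n}-1$ replaces the paper's L'Hospital computation but yields the identical limits $-\tfrac{x\log a}{2}$ and $x$), and control of the remainder by Cauchy--Schwarz combined with the lemma $n^{2}S_{n,a}^*(\phi_x^{4};x)\to 3x^{2}$ and the uniform convergence of $S_{n,a}^*$ applied to the squared remainder. Your parenthetical fallback argument for the uniform vanishing of $S_{n,a}^*(h(\cdot,x)^{2};x)$ is in fact more careful than what the paper writes, but the overall structure is the same.
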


\begin{proof}
We have $f',\;f''\in C_\rho[0,\infty)$ for some $f\in C_\rho[0,\infty)$ and $x\geq 0$. Now we define
\begin{eqnarray}\label{V1}
\xi(t,x)=\begin{cases} \frac{f(t)-f(x)-(t-x)f'(x)-\frac{1}{2}(t-x)^2f''(x)}{(t-x)^2}, & \text{if}\; t\neq x\\ 0, & \text{if}\;t\neq x\end{cases},
\end{eqnarray}

where $\xi(\cdot,x)\in C_\rho[0,\infty).$ By Taylor's theorem  we get
\begin{eqnarray}\label{V2}
f(t)=f(x)+(t-x)f'(x)+\frac{1}{2}(t-x)^2f''(x)+(t-x)^2~\xi(t,x)
\end{eqnarray}
Applying the operators $S_{n,a}^*$ to (\ref{V2}) and multiplying by $n$, we get
\begin{eqnarray}\label{V4}
n(S_{n,a}^*(f;x)-f(x))&=&nS_{n,a}^*((t-x);x)f'(x)+\frac{n}{2}S_{n,a}^*((t-x)^2;x)f''(x)\nonumber\\ &&+ nS_{n,a}^*((t-x)^2\xi(t,x))\nonumber \\ &=& nS_{n,a}^*(\phi_x(t);x)f'(x)+\frac{n}{2}S_{n,a}^*(\phi_x^2(t);x)f''(x)\nonumber\\ &&+ nS_{n,a}^*(\phi_x^2\xi(t,x)),
\end{eqnarray}
where $\phi_x(t)=(t-x)$.\\

Now applying the Cauchy-Schwartz	 theorem to $S_{n,a}^*(\phi_x^2\xi(t,x))$, we get 
\begin{eqnarray}\label{V5}
n|S_{n,a}^*(\phi_x^2(t)\xi(t,x);x)|\leq (n^2S_{n,a}^*(\phi_x^4(t);x))^{\frac{1}{2}}\;(S_{n,a}^*(\xi^2(t,x);x))^{\frac{1}{2}}.
\end{eqnarray}
Let $\xi^2(t;x)=\zeta(t;x)$ and $\zeta(x,x)=0$ as $\xi(x,x)=0$ and       $\zeta(\cdot,x)\in C_\rho[0,\infty)$. But we know that (already proved)\\

$\underset{n\rightarrow\infty}{\lim}S_{n,a}^*(f;x)=f(x)$ uniformly for each $x\in [0,b]$ and $b>0.$\\

So,\begin{eqnarray*}
\underset{n\rightarrow\infty}{\lim}S_{n,a}^*(\xi^2(t,x);x)=\underset{n\rightarrow\infty}{\lim}S_{n,a}^*(\zeta(t,x);x)=\zeta(x,x)=0.
\end{eqnarray*}
 By inequality (\ref{V5}), we get 
\begin{eqnarray}
\underset{n\rightarrow\infty}{\lim}n\;S_{n,a}^*(\phi_x^2(t)\xi(t,x);x)=0.
\end{eqnarray}
Now by (\ref{V4}), we have 
\begin{eqnarray*}
\underset{n\rightarrow\infty}{\lim}n\;(S_{n,a}^*(f;x)-f(x))&=&\underset{n\rightarrow\infty}{\lim}\{n\;S_{n,a}^*(\phi_x(t);x)f'(x)\}+\underset{n\rightarrow\infty}{\lim}\{\frac{n}{2}\;S_{n,a}^*(\phi^2_x(t);x)f''(x)\}\\ &=& \underset{n\rightarrow\infty}{\lim}n\frac{\left(-x(-n+a^{\frac{1}{n}}n-\log{a})\right)}{({-1+a^{\frac{1}{n}}})n} f'(x)\\ &+& \underset{n\rightarrow\infty}{\lim}\frac{n}{2}\left(\frac{x\left(\left(-1+a^{\frac{1}{n}}\right)^2n^2x-\left(-1+a^{\frac{1}{n}}\right)(-1+2nx)\log{a}+x(\log{a})^2 \right)}{({-1+a^{\frac{1}{n}}})^2n^2}\right) f''(x)\\&=& I_1f'(x)+I_2f''(x).
\end{eqnarray*}
Where,
\begin{eqnarray*}
I_1&=& \underset{n\rightarrow\infty}{\lim}n\frac{\left(-x(-n+a^{\frac{1}{n}}n-\log{a})\right)}{({-1+a^{\frac{1}{n}}})n}\\&=&  \underset{n\rightarrow\infty}{\lim}\frac{\left(-x(-1+a^{\frac{1}{n}}-\frac{\log{a}}{n})\right)}{({-1+a^{\frac{1}{n}}})\frac{1}{n}}\\&=&  \underset{p\rightarrow 0}{\lim}\frac{\left(-x(-1+a^{p}-p\log{a})\right)}{({-1+a^{p}})p},~~~~~~~\text{on replacing}~ \frac{1}{n}~ \text{by}~ p.
\end{eqnarray*}
Using L. hospital rule (as $\frac{0}{0}$ form) two times, we have 
\begin{eqnarray*}
I_1&=&  \underset{p\rightarrow 0}{\lim}\frac{-xa^p(\log{a})^2}{a^p(\log{a})^2p+2a^p\log{a}}\\&=& (-\frac{1}{2}x\log{a}),
\end{eqnarray*}
and
\begin{eqnarray*}
I_2&=& \underset{n\rightarrow\infty}{\lim}\frac{n}{2}\left(\frac{x\left(\left(-1+a^{\frac{1}{n}}\right)^2n^2x-\left(-1+a^{\frac{1}{n}}\right)(-1+2nx)\log{a}+x(\log{a})^2 \right)}{({-1+a^{\frac{1}{n}}})^2n^2}\right) \\&=& \underset{n\rightarrow\infty}{\lim}\frac{1}{2}\left(\frac{x\left(\left(-1+a^{\frac{1}{n}}\right)^2x-\left(-1+a^{\frac{1}{n}}\right)(-\frac{1}{n^2}+\frac{2x}{n})\log{a}+\frac{x}{n^2}(\log{a})^2 \right)}{({-1+a^{\frac{1}{n}}})^2\frac{1}{n}}\right) \\&=& \underset{l\rightarrow 0}{\lim}\frac{1}{2}\left(\frac{x\left(\left(-1+a^{l}\right)^2x-\left(-1+a^{l}\right)(-l^2+2xl)\log{a}+xl^2(\log{a})^2 \right)}{({-1+a^{l}})^2l}\right),~~~~~~~~~\text{on replacing}~ \frac{1}{n}~ \text{by}~ l.
\end{eqnarray*}
Using L. hospital rule (as $\frac{0}{0}$ form) three times, we have 
\begin{eqnarray*}
 I_2&=&\underset{l\rightarrow 0}{\lim}\frac{x\left(6a^{2l}x(\log{a})^3+(-1+a^l)xa^l(\log{a})^3-a^l(\log{a})^4(-l^2+2lx)-3a^l(\log{a})^3(-2l+2x)+6a^l(\log{a})^2         \right)}{2\left(6a^{2l}(\log{a})^3l+6a^{2l}(\log{a})^2+6(-1+a^l)a^l(\log{a})^2+2(-1+a^l)la^l(\log{a})^3\right)}\\&=& \frac{x}{2}
\end{eqnarray*}
Using $I_1$ and $I_2$ in above equation, we have
\begin{eqnarray}
\underset{n\rightarrow\infty}{\lim}n\;(S_{n,a}^*(f;x)-f(x))=-\frac{x}{2}(f'(x)\;\log{a}-f''(x))
\end{eqnarray}
\end{proof}
But for an unbounded interval, we generalize the above theorem by a corollary, given bellow :
\begin{corollary}\label{C2}
\em{For each $f\in C[0,\infty)$ such that $f,\; f',\; f''\in K$,  we have 
\begin{eqnarray*}
\underset{n\rightarrow\infty}{\lim}n\;(S_{n,a}^*(f;x)-f(x))=-\frac{x}{2}(f'(x)\;\log{a}-f''(x)),
\end{eqnarray*}
 holds uniformly for all $x\in [0,\infty).$ } 
\end{corollary}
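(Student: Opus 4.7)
The plan is to follow the blueprint of Theorem \ref{MT} while upgrading each step from locally uniform to globally uniform convergence on $[0,\infty)$, exactly in the spirit of how Theorem \ref{T2} extended Theorem \ref{T1}(3). First I would repeat the Taylor expansion decomposition: write
\begin{eqnarray*}
f(t)=f(x)+(t-x)f'(x)+\tfrac{1}{2}(t-x)^{2}f''(x)+(t-x)^{2}\xi(t,x),
\end{eqnarray*}
with $\xi(\cdot,x)$ as in \eqref{V1}, apply $S_{n,a}^{*}$ and multiply by $n$ to obtain the same three-term decomposition as in \eqref{V4}: two deterministic main pieces, $n S_{n,a}^{*}(\phi_x;x)f'(x)$ and $\tfrac{n}{2}S_{n,a}^{*}(\phi_x^{2};x)f''(x)$, plus a remainder $n S_{n,a}^{*}(\phi_x^{2}\xi(t,x);x)$.

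For the two main pieces, the explicit closed-form moments of Lemma \ref{L3}, together with the L'Hospital computations that yielded $I_{1}=-\tfrac{1}{2}x\log a$ and $I_{2}=\tfrac{x}{2}$ in the proof of Theorem \ref{MT}, are elementary identities of continuous functions of $x$; I would verify that the convergence of the coefficient functions $nS_{n,a}^{*}(\phi_x;x)\to -\tfrac{x\log a}{2}$ and $\tfrac{n}{2}S_{n,a}^{*}(\phi_x^{2};x)\to \tfrac{x}{2}$ is uniform on $[0,\infty)$ by inspecting the explicit forms (they are polynomials in $x$ whose $n$-dependent coefficients converge to the stated limits at rates independent of $x$, after factoring). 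Multiplying by the bounded functions $f'(x)$ and $f''(x)$ (which are bounded because $f',f''\in K$) preserves uniform convergence.

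For the remainder, the Cauchy–Schwarz bound \eqref{V5} gives
\begin{eqnarray*}
n\bigl|S_{n,a}^{*}(\phi_x^{2}\xi(t,x);x)\bigr|\leq \bigl(n^{2}S_{n,a}^{*}(\phi_x^{4};x)\bigr)^{1/2}\,\bigl(S_{n,a}^{*}(\xi^{2}(t,x);x)\bigr)^{1/2}.
\end{eqnarray*}
Here $\zeta(t,x):=\xi^{2}(t,x)$ satisfies $\zeta(\cdot,x)\in K$ and $\zeta(x,x)=0$, because $f,f',f''\in K$. Invoking Theorem \ref{T2}, the convergence $S_{n,a}^{*}(\zeta(\cdot,x);x)\to \zeta(x,x)=0$ is now uniform on $[0,\infty)$ rather than only on $[0,b]$. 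Combined with the fact that the first factor $(n^{2}S_{n,a}^{*}(\phi_x^{4};x))^{1/2}$ converges to $\sqrt{3}\,x$, the product can be made uniformly small in $x$ once the second factor is controlled using the weighted Korovkin-type machinery (Proposition 4.2.5-(7) of \cite{AF2}) applied to the auxiliary exponentials $a^{-\alpha x}$ and $e^{-\alpha x}$ treated in the proof of Theorem \ref{T2}.

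The principal obstacle is precisely this product: the factor $(n^{2}S_{n,a}^{*}(\phi_x^{4};x))^{1/2}$ grows linearly in $x$, so a purely local argument will not suffice. The hypothesis $f,f',f''\in K$ must be used to force $\xi(t,x)$, and hence $S_{n,a}^{*}(\xi^{2};x)$, to decay at infinity fast enough that the product vanishes uniformly. This is where Theorem \ref{T2}, together with the Altomare–Campiti cofinal-subspace Korovkin theorem that powers it, is indispensable; it plays the same role here that Theorem \ref{T1}(3) played in the proof of Theorem \ref{MT}, only now yielding uniform control over the whole half-line. Once this uniform smallness is secured, adding the limits of the three pieces gives $\lim_{n\to\infty} n(S_{n,a}^{*}(f;x)-f(x))=-\tfrac{x}{2}\bigl(f'(x)\log a-f''(x)\bigr)$ uniformly on $[0,\infty)$.
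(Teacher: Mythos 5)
First, be aware that the paper itself supplies no argument for Corollary \ref{C2}: it is stated immediately after Theorem \ref{MT} with only the remark that the theorem is being ``generalized'' to the unbounded interval. So there is no proof of record to compare yours against; your proposal is in effect the first attempt at one, and its skeleton --- the Taylor expansion, the decomposition (\ref{V4}), Cauchy--Schwarz on the remainder, and the substitution of Theorem \ref{T2} for Theorem \ref{T1}(3) --- is the natural route.

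However, both steps at which you claim uniformity on $[0,\infty)$ contain genuine gaps. For the main terms: by Lemma \ref{L3}, $nS_{n,a}^*(\phi_x;x)=c_n\,x$ with $c_n=\frac{\log a}{-1+a^{1/n}}-n\to-\frac{\log a}{2}$, so the error against the limit is $\bigl(c_n+\frac{\log a}{2}\bigr)x$, which is unbounded in $x$ for every fixed $n$; the convergence is therefore \emph{not} uniform on $[0,\infty)$ ``at rates independent of $x$ after factoring'', and multiplying by a merely bounded $f'$ cannot repair this --- you would need $x f'(x)$ (and, for the second-moment term, $(x+x^2)f''(x)$) to be bounded, which the hypothesis $f',f''\in K$ does not provide. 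For the remainder, you correctly identify the obstacle that $(n^2S_{n,a}^*(\phi_x^4;x))^{1/2}\sim\sqrt{3}\,x$ grows linearly, but you do not overcome it: Theorem \ref{T2} applies to a single fixed element of $K$, whereas $\zeta(\cdot,x)=\xi^2(\cdot,x)$ is a family indexed by $x$, so invoking it gives at best pointwise (or locally uniform) smallness of $S_{n,a}^*(\zeta(\cdot,x);x)$; and even a hypothetical bound $\sup_x S_{n,a}^*(\zeta(\cdot,x);x)=\varepsilon_n\to0$ still leaves the product $\sqrt{3}\,x\,\sqrt{\varepsilon_n}$ unbounded in $x$ for each $n$. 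Your sentence that the hypotheses ``must be used to force'' sufficient decay is a statement of what is needed, not a demonstration that it holds. To close the argument you would have to either strengthen the hypotheses (for instance, boundedness of $xf'(x)$ and $x^2f''(x)$ together with a uniform-in-$x$ control of $\xi(t,x)$) or reinterpret ``uniformly'' in the weighted norm of $C_2[0,\infty)$; as written, the proposal does not establish the corollary.
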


\section{Comparison of new operators $S^*_{n,a}f$ with the classical Sz$\acute{\text{a}}$sz-Mirakjan operators with respect to convexity}

A comparison of the new operators given by (\ref{s}) with the   Sz$\acute{\text{a}}$sz-Mirakjan operators will take place. And we will show that the present operators (\ref{s}) have better rate of convergence under  certain conditions such as generalized convexity. 

\begin{definition}
\em{ A function $f(x)$ defined on $(a,b)$ is said to be convex with respect to $(e_0,\sigma_a(x))$  i.e.  $(1,\sigma_a(x))-$convex  provided
\begin{eqnarray}\label{c}
\left| \begin{matrix}
   1 & 1 & 1  \cr 
   \sigma_a(x_1) & \sigma_a(x_2) & \sigma_a(x_3)  \cr 
   f(x_1) & f(x_2) & f(x_3)  \cr 
\end{matrix}  \right|\geq 0,\;\;\;\;\ a<x_1<x_2<x_3<b,
\end{eqnarray} 
 
Note that if such an $f(x)\in C[a,b],$ then above ineqaulity will hold, by continuity, for all $a\leq x_1\leq x_2\leq x_3\leq b.$\\
The set of all functions which satisfy (\ref{c}) is denoted by $\ell(1,\sigma_a(x)).$ \\

In general for strict convexity i.e. a function $f$ is strictly $(1, \sigma_a(x))$ convex if above inequality is strictly greater than 0.}
\end{definition}

\begin{remark}\label{R2}
\em{A function $f\in C^2[0,\infty)$ is convex with respect to the function $\sigma_a(x)=a^x,\;a>1,$ iff 
\begin{eqnarray*}
f''(x)\geq (\log({a})) f'(x),\;\;\;\;\ x\geq 0.
\end{eqnarray*}}
\end{remark}

\begin{proof}
Using Remark (3.1) of \cite{MB}, for which a function  $f\in C^2[0,1]$ is convex with respect to any continuous and strictly increasing function $\tau_1(x)$ iff
\begin{eqnarray*}
f''(x)\geq\frac{\tau_1''(x)}{\tau_1'(x)}f'(x),~~~~~~~~x\in[0,1],
\end{eqnarray*}
but extensively on infinite interval, Acar et al. \cite{TA} shown a relation of the given function $f\in C^2[0,\infty)$ with respect to exponential function.\\

Now replacing $\tau_1(x)$ by $a^x$, we have the above result.
\end{proof}

By Corollary \ref{C2} and Remark \ref{R2}, we have       
\begin{corollary}
\em{If given $f\in C^2[0,\infty)$ is $(1,\sigma_a(x))$-convex or $\sigma_a(x)$-convex  with respect to $\sigma_a(x)=a^x,\; a>1$, for all $x\in [0,\infty)$ then $\exists \;  N\in \mathbb{N},$ so that we have 
\begin{eqnarray*}
f(x)\leq S_{n,a}^*(f;x),~~~~~ n\geq N.
\end{eqnarray*}
Here $N$ is dependent on $x$.}
\end{corollary}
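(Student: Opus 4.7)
The plan is to reduce the corollary to the Voronovskaya-type asymptotics of Corollary \ref{C2} together with the analytic characterization of convexity given in Remark \ref{R2}. Fix $x\in[0,\infty)$; since the claim is pointwise in $x$ and the threshold $N$ is allowed to depend on $x$, a pointwise argument suffices.

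First I would dispose of the boundary case $x=0$: from the defining series \eqref{s}, we have $s_n(0)=0$, so only the $k=0$ term survives and $S_{n,a}^*(f;0)=f(0)$, making the desired inequality an equality for every $n$. For $x>0$, Remark \ref{R2} translates the $(1,\sigma_a(x))$-convexity of $f\in C^2[0,\infty)$ into the pointwise differential inequality $f''(x)\ge f'(x)\log a$, so that
\[
\frac{x}{2}\bigl(f''(x)-f'(x)\log a\bigr)\ge 0.
\]
Corollary \ref{C2} then yields
\[
\lim_{n\to\infty} n\bigl(S_{n,a}^*(f;x)-f(x)\bigr)
=-\frac{x}{2}\bigl(f'(x)\log a-f''(x)\bigr)
=\frac{x}{2}\bigl(f''(x)-f'(x)\log a\bigr)\ge 0.
\]
If this limit is strictly positive, then by the definition of limit there exists $N=N(x)\in\mathbb{N}$ such that for all $n\ge N$, the quantity $n(S_{n,a}^*(f;x)-f(x))$ stays above, say, half that positive value; dividing by $n>0$ gives $S_{n,a}^*(f;x)>f(x)$, which is the desired inequality.

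The main obstacle is the degenerate subcase where the convexity inequality collapses to equality at the chosen $x$, i.e.\ $f''(x)=f'(x)\log a$: then the first-order Voronovskaya limit vanishes and one cannot read off the sign of $S_{n,a}^*(f;x)-f(x)$ from Corollary \ref{C2} alone. The natural way out, consistent with the last paragraph of the preceding definition, is to interpret the hypothesis as \emph{strict} $(1,\sigma_a)$-convexity (strict inequality in \eqref{c}, equivalently $f''(x)>f'(x)\log a$), which makes the limit strictly positive for every $x>0$ and completes the argument. Alternatively, one would have to sharpen Theorem \ref{MT} to a second-order Voronovskaya expansion and verify that the leading $n^{-2}$ coefficient also has the correct sign under the weaker, non-strict hypothesis; this finer asymptotics is not developed in the paper, so assuming strict convexity is the cleanest route to the stated result.
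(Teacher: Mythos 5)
Your proposal follows exactly the route the paper intends: the paper offers no written proof, merely prefacing the corollary with ``By Corollary \ref{C2} and Remark \ref{R2}'', and your combination of the Voronovskaya limit with the differential characterization $f''(x)\ge f'(x)\log a$ is precisely that argument made explicit. Your observation about the degenerate case $f''(x)=f'(x)\log a$, where the limit vanishes and the sign of $S_{n,a}^*(f;x)-f(x)$ cannot be read off from Corollary \ref{C2} alone, identifies a genuine gap that is present in the paper's own (unwritten) argument as well; the strict-convexity reading you propose is the natural repair and is consistent with the strict $(1,\sigma_a(x))$-convexity notion introduced at the end of the paper's definition.
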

For main theorems, we have following theorem of Cheney and Sharma  \cite{EW} (also see  \cite{DD}).
%%%%%%%%%%%%%%%%%%%%%%%%%%%%%%%%%%%%%%%%%%%%%%%%%%%%%%%%%%%%%%%%%%%%%%%%%%%%%%%%%%%%%%%%%%%%%%%%%%%%%%%%%%%%%%%%%%%%%%%%%%%%%%%%%%%%%%%%%%%%%%%%%%%

\begin{theorem}\label{T5}
\em{ If $f\in C[0,\infty)$ is convex then we have
 \begin{eqnarray*}
 f(x)\leq S_{n+1}(f;x)\leq S_n(f;x),~~~~~~~~~~~x\geq 0,~~~n\geq 1,
\end{eqnarray*}  
as well as if  $f$ is increasing (decreasing), then $S_n(f;x)$ is increasing (decreasing).}
\end{theorem}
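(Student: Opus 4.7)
My plan is to treat the three assertions separately, in each case leveraging the probabilistic representation $S_n(f;x) = E[f(N_n/n)]$ with $N_n \sim \mathrm{Poisson}(nx)$.

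The lower bound $f(x) \leq S_{n+1}(f;x)$ is essentially Jensen's inequality in its operator form: since $S_n$ is positive linear with $S_n(e_0;x)=1$ and $S_n(e_1;x)=x$, convexity of $f$ yields $S_n(f;x) \geq f(S_n(e_1;x)) = f(x)$. This piece requires nothing beyond the moment identities already established earlier in the paper (applied to the classical operator, not the modified one).

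For the harder inequality $S_{n+1}(f;x) \leq S_n(f;x)$, I plan to exhibit an explicit Poisson-thinning coupling. Set $p = n/(n+1)$, let $M \sim \mathrm{Poisson}((n+1)x)$, and, conditional on $M$, let $N \sim \mathrm{Binomial}(M,p)$. Poisson thinning gives $N \sim \mathrm{Poisson}(nx)$ marginally (so that $S_n(f;x) = E[f(N/n)]$) together with $E[N/n \mid M] = M/(n+1)$. Applying conditional Jensen and then taking expectation,
\begin{equation*}
S_n(f;x) \;=\; E\bigl[\,E[f(N/n)\mid M]\,\bigr] \;\geq\; E\bigl[f(M/(n+1))\bigr] \;=\; S_{n+1}(f;x).
\end{equation*}
The main obstacle here is recognizing the right coupling; a purely algebraic attempt to rearrange $S_n f - S_{n+1} f$ as a weighted sum of nonnegative second differences of $f$ is possible but combinatorially messy, whereas the thinning viewpoint makes the underlying convex-order comparison transparent.

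For the monotonicity claim, I would differentiate the defining series termwise. The standard identity $\frac{d}{dx}s_{n,k}(x) = n\bigl(s_{n,k-1}(x) - s_{n,k}(x)\bigr)$, with the convention $s_{n,-1}\equiv 0$, followed by reindexing, gives
\begin{equation*}
\frac{d}{dx}S_n(f;x) \;=\; n\sum_{k=0}^{\infty} s_{n,k}(x)\left[f\!\left(\tfrac{k+1}{n}\right) - f\!\left(\tfrac{k}{n}\right)\right].
\end{equation*}
Every summand carries the sign of the forward difference of $f$, so $S_n(f;\cdot)$ inherits the monotonicity of $f$; termwise differentiation is justified by locally uniform convergence of the series on $[0,\infty)$, so no subtle exchange-of-limits issue arises.
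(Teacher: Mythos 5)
The paper does not prove this statement at all: Theorem \ref{T5} is imported verbatim as a known result of Cheney and Sharma \cite{EW} (with a further pointer to Stancu \cite{DD}), and is then used as a black box in the convexity comparisons of Section 8. Your proposal therefore supplies an argument where the paper supplies only a citation, and the argument you give is correct. The Jensen step for $f(x)\leq S_{n+1}(f;x)$ is the standard one for positive linear operators reproducing $e_0$ and $e_1$ (at $x=0$ the inequality is an equality since the Poisson mass concentrates at $k=0$, so the supporting-line issue at the boundary is vacuous). The Poisson-thinning coupling for $S_{n+1}(f;x)\leq S_n(f;x)$ is a genuinely different, and considerably cleaner, route than the original Cheney--Sharma argument, which rearranges $S_nf-S_{n+1}f$ into a sum with nonnegative divided differences of $f$; your coupling makes the convex-order comparison of $N/n$ and $M/(n+1)$ transparent at the cost of importing the thinning identity, while the classical computation is elementary but, as you note, combinatorially heavy. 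The derivative identity for the monotonicity claim is also the standard one and is correct. Two small points you should tighten: conditional Jensen needs $E\lvert f(N_n/n)\rvert<\infty$, i.e.\ you should state (as the paper also fails to) that $f$ is restricted to a class for which the series $S_n(f;x)$ converges absolutely; and termwise differentiation is justified by locally uniform convergence of the \emph{differentiated} series $n\sum_k s_{n,k}(x)\bigl[f\bigl(\tfrac{k+1}{n}\bigr)-f\bigl(\tfrac{k}{n}\bigr)\bigr]$, not of the original series, so the sentence justifying that exchange should be corrected accordingly. Neither point is a substantive gap.
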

%%%%%%%%%%%%%%%%%%%%%%%%%%%%%%%%%%%%%%%%%%%%%%%%%%%%%%%%%%%%%%%%%%%%%%%%%%%%%%%%%%%%%%%%%%%%%%%%%%%%%%%%%%%%%%%%%%%%%%%%%%%%%%%%%%%%%%%%%%%%%%%%%%%%%%%%%%%%%%%%%%%%%%%%%%%%%%%%%%%%%%%%%%%%%%%%%%
\begin{theorem}\em{
Let $f\in C[0,\infty)$ is decreasing and convex for any $x\in[0,\infty),$ then there exists $N\in\mathbb{N}$ such that 
\begin{eqnarray*}
S_{n,a}^*(f;x)\geq  S_{n+1,a}^*(f;x)\geq f(x),~~~~~~ n\geq N.
\end{eqnarray*} }
\end{theorem}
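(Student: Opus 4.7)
The plan is to reduce the statement to a monotonicity question for the classical Szász-Mirakjan operator via the factorization $S_{n,a}^*(f;x) = S_n(f; s_n(x))$, which is immediate from comparing definitions \eqref{1} and \eqref{s} together with \eqref{a}, and then to apply Theorem \ref{T5} twice.

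First I would prove two analytic properties of $s_n$. Writing $s_n(x) = x \cdot \frac{(\log a)/n}{a^{1/n}-1}$ and setting $u_n = (\log a)/n$, we can express $s_n(x) = x\,\varphi(u_n)$ where $\varphi(u) := u/(e^u - 1)$. A short computation shows $\varphi'(u) = [e^u(1-u) - 1]/(e^u-1)^2$; since $e^u(1-u)-1$ vanishes at $u=0$ and has derivative $-u e^u < 0$, it is negative on $(0,\infty)$, hence $\varphi$ is strictly decreasing there with $\varphi(0^+)=1$. Because $u_n \downarrow 0$ as $n \to \infty$, this gives, for every fixed $x > 0$,
\begin{equation*}
0 < s_n(x) < s_{n+1}(x) < x \text{ and } s_n(x) \to x \text{ as } n \to \infty.
\end{equation*}

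With this in hand, write $S_{n,a}^*(f;x) = S_n(f; s_n(x))$ and $S_{n+1,a}^*(f;x) = S_{n+1}(f; s_{n+1}(x))$, and chain two applications of Theorem \ref{T5}. Since $f$ is convex, $S_n(f;y) \geq S_{n+1}(f;y)$ for every $y \geq 0$; specializing to $y = s_n(x)$ gives $S_n(f; s_n(x)) \geq S_{n+1}(f; s_n(x))$. Since $f$ is decreasing, Theorem \ref{T5} also says $S_{n+1}(f;\cdot)$ is decreasing on $[0,\infty)$, so $s_n(x) \leq s_{n+1}(x)$ yields $S_{n+1}(f; s_n(x)) \geq S_{n+1}(f; s_{n+1}(x))$. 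Concatenating gives the first inequality $S_{n,a}^*(f;x) \geq S_{n+1,a}^*(f;x)$. For the lower bound, the convexity half of Theorem \ref{T5} gives $S_{n+1}(f; s_{n+1}(x)) \geq f(s_{n+1}(x))$, and then monotonicity of $f$ together with $s_{n+1}(x) \leq x$ yields $f(s_{n+1}(x)) \geq f(x)$.

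In this approach the chain works for every $n \geq 1$, so the threshold $N$ in the statement is only conservative bookkeeping that the proof will freely absorb. The essentially only non-trivial step is the monotonicity lemma for $s_n$, which reduces to the sign analysis of $\varphi'$ sketched above; everything else follows directly from Theorem \ref{T5} applied to the inner classical operator $S_n$. I expect no further obstacle, since no novel inequality for $S_{n,a}^*$ itself is needed beyond the factorization through $S_n$.
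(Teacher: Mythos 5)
Your proof is correct, and it follows the same overall architecture as the paper's: factor $S_{n,a}^*(f;x)=S_n(f;l_n(x))$ through the classical operator, show the inner sequence increases in $n$ and stays below $x$, then chain the Cheney--Sharma inequalities of Theorem \ref{T5}. (Your $s_n$ and the paper's $l_n=(S_n(a^t;\cdot))^{-1}\circ a^x$ are the same function, since $S_n(a^t;y)=e^{ny(a^{1/n}-1)}$.) Two sub-steps are handled genuinely differently, and in both cases your route is at least as solid. First, you prove $s_n(x)\le s_{n+1}(x)\le x$ by elementary calculus on $\varphi(u)=u/(e^u-1)$, whereas the paper deduces $l_n\le l_{n+1}$ abstractly from the convexity of $a^x$ via Theorem \ref{T5} and the monotonicity of inverses; your computation is self-contained and also delivers $s_{n+1}(x)\le x$ explicitly, which you need later. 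Second, for the lower bound $S_{n+1,a}^*(f;x)\ge f(x)$ you argue directly: $S_{n+1}(f;s_{n+1}(x))\ge f(s_{n+1}(x))\ge f(x)$, using convexity and then the monotonicity of $f$ together with $s_{n+1}(x)\le x$. The paper instead obtains this by combining the monotone decrease in $n$ with the uniform convergence of Theorem \ref{T2}; that appeal is delicate, since Theorem \ref{T2} requires $f\in K$ (finite limit at infinity), which a decreasing convex function such as $f(x)=-x$ need not satisfy, so your direct argument is preferable. A minor bonus of your version is that the chain holds for every $n\ge 1$, so the threshold $N$ in the statement is indeed superfluous, as you note. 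One presentational remark: where you chain $S_n(f;s_n(x))\ge S_{n+1}(f;s_n(x))\ge S_{n+1}(f;s_{n+1}(x))$, the paper writes an equivalent telescoping decomposition (which as printed contains a sign slip); your linear chain is cleaner and unambiguous.
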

\begin{proof}
Before passing to the direct proof of the above theorem, we shall express the given operators (\ref{s}) in form of  Sz$\acute{\text{a}}$sz-Mirakjan operators. We can write $S_{n,a}^*(f;x)=S_n(f;l_n)$, where  
\begin{eqnarray*}
l_n=(S_n(a^t;x))^{-1}\circ a^x.
\end{eqnarray*}
Since here $a^x,\; 0\leq x<\infty,\; a>1$ is a convex function and using Theorem \ref{T5},  then we have
\begin{eqnarray*}
 S_{n}a^t&\geq &  S_{n+1}a^t,\\
(S_{n+1}a^t)^{-1} &\geq &  (S_{n}a^t)^{-1},\\
l_{n+1}=(S_{n+1}a^t)^{-1}\circ a^x &\geq &  (S_{n}a^t)^{-1}\circ a^x=l_n.
\end{eqnarray*}
i.e.,
\begin{eqnarray}\label{V6}
l_n(x)\leq l_{n+1}(x)
\end{eqnarray}
Since $f\in C[0,\infty)$ is convex and decreasing then we have 
\begin{eqnarray}\label{V7}
f(x)\leq S_n(f;x)\leq S_{n+1}(f;x)
\end{eqnarray}
Using (\ref{V6}), we have 
\begin{eqnarray*}
S_{n,a}^*(f;x)-S_{n+1,a}^*(f;x)&=&(S_n f)\circ(l_{n}(x))-(S_{n+1}f)\circ(l_{n+1}(x))\\
&=&(S_{n}f)\circ(l_n(x))-(S_{n+1}f)\circ(l_{n}(x))+(S_{n+1}f)\circ(l_{n+1}(x))\\&&-(S_{n+1}f)\circ(l_{n}(x))\\ &\geq0.&
\end{eqnarray*}
And hence 
\begin{eqnarray}\label{V8}
S_{n,a}^*(f;x)\geq  S_{n+1,a}^*(f;x),\;\;\;\;\;\;\ \forall\; x\in[0,\infty),\; a>1.
\end{eqnarray}
Since  last inequality of the theorem is a consequence of  inequality (\ref{V8}) and Theorem \ref{T2}.
\end{proof}
\begin{theorem}\em{
Let $f\in C[0,\infty)$ be an increasing and $(1,a^x)$convex with respect to  $a^x$, $a>1$. Then
\begin{eqnarray*}
f(x)\leq S_{n,a}^*(f;x)\leq S_n(f;x),~~~~~\forall\; x\in[0,\infty).
\end{eqnarray*} }
\end{theorem}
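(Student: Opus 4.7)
The plan is to prove the two inequalities separately, using the factorization $S_{n,a}^*(f;x) = S_n(f; s_n(x))$ already exploited in the preceding theorem, where $s_n(x) = x\log a/(n(a^{1/n}-1))$ is the sequence from (\ref{a}). This identification follows by computing $S_n(a^t;x) = e^{nx(a^{1/n}-1)}$, inverting, and composing with $a^x$.

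For the lower bound $f(x) \le S_{n,a}^*(f;x)$, I would first observe that the determinant condition (\ref{c}) defining $(1,a^x)$-convexity is, after substituting $y_i = a^{x_i}$, exactly the Jensen three-point inequality for the transformed function $g(y) := f(\log_a y)$ on $[1,\infty)$. Thus $(1,a^x)$-convexity of $f$ is equivalent to ordinary convexity of $g$. Convexity of $g$ produces, at every point $y_0 = a^{x_0} \ge 1$, a support line (using the one-sided derivative when $x_0 = 0$): there exist constants $\alpha,\beta$ depending on $x_0$ such that $f(t) \ge \alpha + \beta\, a^t$ for all $t \ge 0$, with equality at $t = x_0$. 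Applying the positive linear operator $S_{n,a}^*(\cdot\,;x_0)$, which preserves $e_0$ by Lemma \ref{L1} and $a^t$ by Theorem \ref{T1}(2), I obtain $S_{n,a}^*(f;x_0) \ge \alpha + \beta\, a^{x_0} = f(x_0)$.

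For the upper bound $S_{n,a}^*(f;x) \le S_n(f;x)$, I would use the factorization together with a monotonicity argument. Because $f$ is increasing, Theorem \ref{T5} (Cheney--Sharma) guarantees that $x \mapsto S_n(f;x)$ is increasing, so it suffices to show $s_n(x) \le x$ for all $x \ge 0$. Rearranging, this amounts to $\log a \le n(a^{1/n}-1)$; writing $u = 1/n$, the function $h(u) := a^u - 1 - u\log a$ satisfies $h(0) = 0$ and $h'(u) = (\log a)(a^u - 1) \ge 0$ for $u \ge 0$ and $a > 1$, so $h \ge 0$ and the required inequality holds.

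The main obstacle is the support-line step in the lower bound: the reader might expect to invoke Remark \ref{R2}, but that requires $f \in C^2$, whereas the theorem only assumes $f \in C[0,\infty)$. The reduction to ordinary convexity of $g = f\circ \log_a$ sidesteps this by appealing directly to the classical fact that any convex function on an interval admits a support line at every point (via one-sided slopes at the boundary $y_0 = 1$). Everything else is either an elementary computation or a direct appeal to results already established in the paper.
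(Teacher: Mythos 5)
Your proposal is correct and follows the same overall architecture as the paper's proof (split into two inequalities, use the factorization $S_{n,a}^*(f;x)=S_n(f;s_n(x))$, and finish the upper bound via monotonicity of $S_n(f;\cdot)$ for increasing $f$), but it justifies both key steps differently and more self-containedly. For the lower bound the paper simply cites Theorem (3) and Remark (2) of Ziegler on generalized convexity; your reduction of $(1,a^x)$-convexity to ordinary convexity of $g=f\circ\log_a$ followed by a support line $f(t)\ge\alpha+\beta a^t$ and the preservation of $e_0$ and $a^t$ is in effect a direct proof of the Ziegler result in this special case, which makes the argument verifiable without the external reference. (The only point to watch is $x_0=0$, where the right-hand slope of $g$ at $y_0=1$ could be $-\infty$ and no support line exists; but there $S_{n,a}^*(f;0)=f(0)$ trivially, so the inequality still holds.) For the upper bound the paper derives $l_n(x)\le x$ abstractly from $S_n(a^t;x)\ge a^x$ (convexity of $a^x$) and the monotonicity of $(S_n(a^t))^{-1}$, whereas you compute $s_n(x)=x\log a/\bigl(n(a^{1/n}-1)\bigr)$ explicitly and verify $n(a^{1/n}-1)\ge\log a$ by elementary calculus; the two are equivalent since $l_n=s_n$, and your version has the advantage of being a one-line concrete estimate while the paper's version would generalize to situations where the inverse cannot be computed in closed form. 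Both routes are sound.
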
 
\begin{proof}
With the help of Theorem (3) and Remark (2) of Ziegler \cite{ZZ} and since the function $f$ is $(1,a^x)$ convex with respect to   $a^x,\; a>1,\; x\geq0$ and also $a^x$ is a convex function, then we have 
\begin{eqnarray*}
f(x)\leq S_{n,a}^*(f;x),\;\;\;\;\ x\in[0,\infty),\;a>1,\; n\geq 1,
\end{eqnarray*}
but also $S_n(a^t)\geq a^x$ as $a^x$ is convex and  $(S_n(a^t))^{-1}$ is increasing, so we have 
\begin{eqnarray}\label{V9}
(S_n(a^t))^{-1}\circ S_n(a^t) &\geq & (S_n(a^t))^{-1}\circ a^x,\nonumber\\ \Rightarrow~~~~~~~~~~~~~~~~~~~~~~~~~~~~~~~~ x &\geq & \left((S_n(a^t))^{-1}\circ a^x\right)(x),
\end{eqnarray}
since, $l_n=(S_n(a^t))^{-1}\circ a^x.$\\

Then from (\ref{V9}), it  directly follows 
\begin{eqnarray*}
S_{n,a}^*(f;x)\leq  S_{n}(f;x).
\end{eqnarray*}
\end{proof}
\section{An Extension}
In a further investigation we obtain approximations by linear positive operators using  new  Kantorovich- type operators in the space of integrable function. The Kantorovich type operators of the operators (\ref{s}) are given as bellow:  

\begin{eqnarray}\label{NO}
\tilde{S}_{n,a}^*(f;x)=n\sum\limits_{k=0}^{\infty}a^{\left(\frac{-x}{-1+a^{\frac{1}{n}}}\right)}\frac{(x\log{a})^k}{(-1+a^{\frac{1}{n}})^kk!}\int\limits_{\frac{k}{n}}^{\frac{k+1}{n}}f(t)\,dt.
\end{eqnarray}

Here we get the following identities\label{I3} as given bellow. 
\begin{eqnarray*}
1.~\tilde{S}_{n,a}^*(e_0;x)&=& 1,\\
2.~\tilde{S}_{n,a}^*(e_1;x)&=& \frac{1}{2n}+\frac{x\log{a}}{\left(-1+a^{\frac{1}{n}}\right)n},\\
3.~\tilde{S}_{n,a}^*(e_2;x)&=& \frac{1}{3n^2}+\frac{2x\log{a}}{\left(-1+a^{\frac{1}{n}}\right)n^2}+\frac{x^2(\log{a})^2}{\left(-1+a^{\frac{1}{n}}\right)^2n^2}.
\end{eqnarray*}
With regard of the above integral operators (\ref{NO}), we raise the problem to investigate
their convergence in $L_p-$spaces.\\

Here, by the above identities, we get 
\begin{eqnarray*}
\underset{n\rightarrow\infty}{\lim} \tilde{S}_{n,a}^*(e_i)=e_i,\;\;\;\;\;\ e_i=x^i,\; i=0,1,2.
\end{eqnarray*}

\end{document}